\documentclass[pdflatex,sn-mathphys-num]{sn-jnl}

\usepackage{geometry}
\usepackage{graphicx}%
\usepackage{multirow}%
\usepackage{amsmath,amssymb,amsfonts}%
\usepackage{amsthm}%
\usepackage{mathrsfs}%
\usepackage[title]{appendix}%
\usepackage{xcolor}%
\usepackage{textcomp}%
\usepackage{manyfoot}%
\usepackage{booktabs}%
\usepackage{algorithm}%
\usepackage{algorithmicx}%
\usepackage{algpseudocode}%
\usepackage{listings}%
\usepackage{comment}
\usepackage{subcaption}
\usepackage{hyperref}
\usepackage{siunitx}
\usepackage{multirow}

\usepackage{pgfplots}
\pgfplotsset{compat=1.18}
\usepgfplotslibrary{fillbetween}
\usetikzlibrary{calc}
\usepackage{cancel}
\usepackage{tikz}
\usepackage{tikz-3dplot}

\theoremstyle{thmstyleone}%
\newtheorem{theorem}{Theorem}
\newtheorem{proposition}[theorem]{Proposition}%

\theoremstyle{thmstyletwo}%
\newtheorem{remark}{Remark}%

\theoremstyle{thmstylethree}%
\newtheorem{definition}{Definition}%

\DeclareMathOperator*{\argmin}{\arg\min}
\DeclareMathOperator*{\argmax}{\arg\max}

\begin{document}

\title[Nonconvex optimization methods for ground states in disordered continuous-spin models]{Nonconvex optimization methods for ground states in disordered continuous-spin models}

\author[1]{\fnm{Ramgopal} \sur{Agrawal}}\email{ramgopal.agrawal@uniroma1.it}

\author[2]{\fnm{Lorenzo} \sur{Ciarpaglini}}\email{lorenzo.ciarpaglini@uniroma1.it}

\author[3]{\fnm{Pierluigi} \sur{Mansueto}}\email{ pierluigi.mansueto@unifi.it}

\author[1]{\fnm{Enzo} \sur{Marinari}}\email{enzo.marinari@uniroma1.it}

\author[2]{\fnm{Marco} \sur{Sciandrone}}\email{marco.sciandrone@uniroma1.it}

\author[2]{\fnm{Diego} \sur{Scuppa}}\email{diego.scuppa@uniroma1.it}

\author*[2]{\fnm{Elisa} \sur{Trasatti}}\email{elisa.trasatti@uniroma1.it}

\affil[1]{\orgdiv{Department of Physics}, \orgname{Sapienza University of Rome}, \orgaddress{\street{Piazzale Aldo Moro 2}, \city{Roma}, \postcode{00185},  \country{Italy}}}

\affil[2]{\orgdiv{Department of Computer, Control and Management Engineering}, \orgname{Sapienza University of Rome}, \orgaddress{\street{Via Ariosto 25}, \city{Roma}, \postcode{00185},  \country{Italy}}}

\affil[3]{\orgdiv{Department of Information Engineering}, \orgname{University of Florence}, \orgaddress{\street{Via S. Marta 3}, \city{Firenze}, \postcode{501395},  \country{Italy}}}

\abstract{\unboldmath 
This work explores the global optimization problem of finding lowest-energy configurations in disordered continuous-spin models from statistical physics, with a particular focus on the random field $XY$ model. Due to an extremely non-convex nature of the associated energy landscape, this problem remains highly challenging. From an optimization perspective, we reformulate the traditional angular Hamiltonian as a constrained problem on the Cartesian product of spheres, allowing the application of Riemannian optimization techniques, which show better computational performance. We design a family of Basin Hopping algorithms whose perturbation mechanisms are specifically designed to exploit the structure of the underlying physical model, and further extend them within a Population Basin Hopping framework. The proposed methods are evaluated against optimization algorithms widely used in computational physics. The proposed  variants turn out to be the most effective method in the comparison, consistently attaining lower-energy configurations within the same computational budget. This work establishes a robust link between continuous-spin systems and continuous global optimization, providing a high-performance benchmark for exploring complex energy landscapes.}


\keywords{smooth optimization; continuous-spin models; Riemannian optimization; statistical mechanics; monotonic basin hopping}

\maketitle

\newpage

\section{Introduction}\label{sec:intro}

Global optimization problems are central to a wide range of statistical physics problems~\cite{Wales_2004,10.5555/1592967}. These problems typically arise when the underlying energy landscape is highly complex and characterized by a large number of local optima. While discrete energy minimization problems can often be addressed using a variety of combinatorial methods such as graph-cut methods~\cite{hartmann2006phase,969114,10.1145/227683.227684}, Tree-Reweighted Message Passing~\cite{1677515}, semidefinite programming~\cite{10.1145/3005345}, the extension to continuous variables leads to significantly more challenging global optimization problems, particularly in the presence of non-convex objective (energy) functions.
\par\medskip\noindent
In this work, we consider disordered continuous-spin systems from statistical physics, whose energy landscapes become highly rugged due to the interplay between continuous spin variables and quenched disorder. Representative examples include spin-glass models with frustrated interactions~\cite{PhysRevB.78.014419,PhysRevE.94.052143,PhysRevB.91.134203,Baity-Jesi_2019} and random field models~\cite{PhysRevB.53.15193,PhysRevB.88.224418,Lupo_2019,XYmodel}. Determining the lowest-energy configurations (ground states) of these systems constitutes a difficult large-scale global optimization problem. The computational difficulty originates from the existence of an extremely large number of metastable states separated by large energy barriers, which severely limits the effectiveness of standard optimization procedures such as steepest descent and simulated annealing~\cite{doi:10.1126/science.220.4598.671,PhysRevB.88.224418,PhysRevB.22.3816}. A few numerical approaches have been proposed to improve the computation of ground states in continuous-spin systems. In two-dimensional spin-glass models, algorithms inspired by minimum-weight perfect matching exploit the topology of domain-wall excitations to construct collective global updates capable of overcoming large barriers~\cite{PhysRevLett.96.097206,PhysRevE.76.066706}. However, these methods are specifically tailored to two-dimensional spin-glass systems and cannot be readily extended to higher dimensions or to random field models. Another class of approaches relies on discretizing the continuous spin variables~\cite{Lupo_2019}, thereby enabling the application of combinatorial optimization techniques. Nevertheless, the resulting optimization problem remains \textit{NP}-hard~\cite{hartmann2006phase,969114,PhysRevE.97.053307}, while the quality of the computed ground state depends on the discretization level. Consequently, efficient optimization frameworks capable of treating the original continuous problem directly remain highly desirable.
\par\medskip\noindent
The present work adopts a different perspective by treating ground-state computation as a continuous global optimization problem. As a representative case study, we consider the three-dimensional random field $XY$ model. This model has been extensively investigated in statistical physics due to its rich disorder-induced features and the absence of a conventional ordered phase~\cite{PhysRevB.53.15193,PhysRevB.88.224418,Lupo_2019,XYmodel}. From an optimization viewpoint, it constitutes an excellent benchmark because the presence of on-site random field disorder gives rise to an extremely rugged energy landscape with numerous metastable states.

\par\medskip\noindent
We observe that the problem can be formulated in two equivalent ways (see details of the objective function in Section~\ref{sec:problem}): 
\begin{enumerate}
	\item[a)] as an unconstrained optimization problem in a Euclidean space; 
	\item[b)] as an optimization problem on a Riemannian manifold. 
\end{enumerate}
The first formulation is based on angular variables and trigonometric functions and will be exploited to derive theoretical insights from a global optimization perspective. The second formulation is a constrained quadratic problem with a feasible set given by the Cartesian product of spheres and will be adopted in the numerical experiments. The motivation is primarily computational: the objective function, its gradient, and its Hessian involve only inner products between two-component vectors, rather than trigonometric functions, so that their evaluation is considerably cheaper (see details in Section~\ref{sec:computational efficiency}). Moreover, various efficient and effective solvers are available for optimization on manifolds. Building on this geometric formulation, we design a family of Basin Hopping algorithms whose perturbation mechanisms are tailored to the structure of the feasible set and the objective function. We further extend this framework to a Population Basin Hopping scheme, in which a set of candidate configurations evolves collectively to improve exploration of the energy landscape. We compare the resulting algorithms against a MultiStart strategy, the genetic algorithm Differential Evolution \cite{Storn1997}, and optimization methods traditionally employed in the physics literature for this class of problems, including Parallel Tempering \cite{Hukushima1996,marinari1992} and Simulated Annealing \cite{doi:10.1126/science.220.4598.671}, showing that the proposed variants consistently attain the most competitive results among all the strategies considered. In this context, the main contributions of the present work are as follows:
\begin{enumerate}
\item [1)] the design and implementation of a global optimization framework  useful for helping to better understand the underlying statistical physics problem;
\item [2)] the definition of a benchmark for a class of global optimization problems with putative global optima.
\end{enumerate}

\par\noindent
Such benchmark instances are of independent interest, as they provide structured yet challenging test cases for assessing global optimization algorithms on non-convex problems. 
Finally, we expect that this work will foster stronger interaction between the statistical physics and global optimization communities.

\par\medskip\noindent
The paper is organized as follows. Section 2 describes and defines the unconstrained optimization problem and presents a study of the properties of the objective function. Section 3 derives an equivalent formulation in terms of optimization on a Riemannian manifold. Section 4 presents the proposed Basin Hopping algorithms with problem-specific perturbation strategies and their population-based counterparts. Section 5 presents the experimental results, including a comparison of the two formulations, an assessment of the proposed perturbation strategies, and a benchmark against the MultiStart scheme and established optimization methods from the computational physics literature. Finally, Section 6 contains the conclusions and directions for future work. 


\section{Formulation of the unconstrained optimization problem}\label{sec:problem}
 The random field $XY$ model is defined on a $d$-dimensional lattice of size $L$ with periodic boundary conditions (see Figure \ref{Lattice}). Each site $i$ of the lattice hosts a two-component unit vector spin of orientation $\theta_i$ and a two-component unit vector random field of orientation $\phi_i$.

\begin{figure}[ht]
\centering
\begin{tikzpicture}[scale=1.2]
\def\L{4}
\foreach \x in {0,...,3}
  \foreach \y in {0,...,3}
{
    \fill[black] (\x,\y) circle (0.06);

    \ifnum\x<3
      \draw[gray, thin] (\x,\y) -- ({\x+1},\y);
    \fi
    \ifnum\y<3
      \draw[gray, thin] (\x,\y) -- (\x,{\y+1});
    \fi
}
\foreach \y in {0,...,3}
{
  \draw[gray, dashed, thin]
    (3,\y) .. controls +(0.6,0.3) and +(-0.6,0.3) .. (0,\y);
}
\foreach \x in {0,...,3}
{
  \draw[gray, dashed, thin]
    (\x,3) .. controls +(0.3,0.6) and +(0.3,-0.6) .. (\x,0);
}
\draw[->, thick] (0,0) -- (3.6,0) node[anchor=north east]{$x$};
\draw[->, thick] (0,0) -- (0,3.6) node[anchor=south east]{$y$};

\fill[red] (1,0) circle (2pt);   
\fill[blue] (0,0) circle (2pt);  
\fill[blue] (1,1) circle (2pt);  
\fill[blue] (2,0) circle (2pt);  
\fill[blue] (1,3) circle (2pt);  
\end{tikzpicture}
\caption{Two-dimensional lattice with \(L=4\), showing periodic boundary conditions. The red point highlights a site $i$ and the blue points indicate its nearest neighbors $N(i)$.}
\label{Lattice}
\end{figure}

\par\medskip\noindent 
We define the set of lattice sites as
\begin{equation*}
    I := \bigl\{1,\dots,L^d\}.
\end{equation*}
For each site $i\in I$, we denote by $N(i)$ the set of its nearest neighbors. Each site has exactly $2d$ nearest neighbors, i.e. $|N(i)| = 2d$ for all $i \in I$.

\par\medskip\noindent
The objective function of the model in terms of angular variables is given by:
 \begin{equation}
 \label{eqn:function}
    f(\theta) = - \frac{1}{2}\sum_{i\in I}\sum_{j\in N(i)}\cos(\theta_i-\theta_j) - \Delta \sum_{i\in I} \cos(\theta_i-\phi_i),
\end{equation}
  where $\theta=(\theta_1,\dots,\theta_{L^d})^T$ is a vector containing the $L^d$ angles $\theta_i$.  In addition,
 \begin{itemize}
    \item $\phi_i \sim \mathcal{U}(0, 2\pi)$ denotes the orientation of the random field at site $i$ (kept fixed during the energy minimization process);
    \item $\Delta>0$ denotes the so-called disorder strength.
\end{itemize}
\par\smallskip\noindent
Then, the unconstrained optimization problem is the following:
\begin{equation}\label{unconstrained}
    \min_{\theta \in \mathbb{R}^{L^d}} f(\theta).
\end{equation}

 \par\medskip\noindent
We now discuss some preliminary observations on the objective function defined in \eqref{eqn:function}, analyzing certain cases in which a good approximation of the global minimum can be obtained. Note that there exists at least one solution, as $f$ is a continuous function and can be restricted, without any loss of generality, to the compact set $[0,2\pi]^{L^d}$.

\par\medskip\noindent
First, the objective function $f$ can be expressed as a weighted sum of two terms:  $f(\theta)=f_1(\theta)+\Delta f_2(\theta;\phi)$, where
\begin{equation*}
    f_1(\theta)=-\frac{1}{2}\sum_{i \in I}\sum_{j\in N(i)}\cos(\theta_i-\theta_j), \quad\quad f_2(\theta;\phi)=-\sum_{i \in I} \cos(\theta_i-\phi_i).
\end{equation*} 
The two terms $f_1$ and $f_2$ are respectively bounded by:
\begin{equation}
\label{eqn:bounds}
    -dL^d  \le f_1(\theta) \le dL^d, \quad\quad -L^d \le f_2(\theta;\phi) \le L^d.
\end{equation} 
Let $f_{\min}$ be the global minimum of $f$. By \eqref{eqn:bounds}, $f_{\min}$ is lower-bounded by the following quantity:
\begin{equation}
\label{eq:flow}
f_{\min}\ge f_{low}:=-(d+\Delta)L^d.
\end{equation}
In addition, the lower-bounds in \eqref{eqn:bounds} are attained by the respective minimizers of the two functions. Moreover,
\begin{itemize}
\item for all $i\in I$ and $j \in N(i)$, $\cos(\theta_i-\theta_j)$ is maximized when $\theta_i=\theta_j$ (mod $2\pi$); hence,  the minimum of the first function is attained when all variables coincide (mod
$2\pi$):
\begin{equation*}
\argmin_{\theta \in \mathbb{R}^{L^d}} f_1(\theta) = \{\theta=(\theta_1,\dots,\theta_{L^d})^T : \exists\bar{\theta}\in\mathbb R \text{ s.t. } \theta_i=\bar{\theta}\mod 2\pi\quad\forall i\in I\}; 
\end{equation*}
\item for all $i\in I$, $\cos(\theta_i-\phi_i)$ is maximized when $\theta_i=\phi_i$ (mod $2\pi$); hence,  the minimizer of the second function is equal to the external field $\phi$ (mod
$2\pi$):
\begin{equation*}
\argmin_{\theta \in \mathbb{R}^{L^d}} f_2(\theta;\phi) = \{\theta=(\theta_1,\dots,\theta_{L^d})^T : \theta_i=\phi_i\mod 2\pi \quad\forall i\in I\}.
\end{equation*}
\end{itemize}

\begin{remark}
\label{rem:invariance}
There is a unique (mod $2\pi$) minimizer of $f_2$, but infinite minimizers of $f_1$. Indeed, $f_1$ is rotational invariant: consider two vectors $\theta^a,\theta^b\in \mathbb R^{L^d}$ such that $\theta^a_i=\theta_i^b+\bar{\theta} \mod 2\pi$ for all $i\in I$ (for some $\bar\theta\in\mathbb R$), then:
\begin{equation*}
    f_1(\theta^a)=-\frac{1}{2}\sum_{i \in I}\sum_{j\in N(i)}\cos(\theta_i^a-\theta_j^a)=-\frac{1}{2}\sum_{i \in I}\sum_{j\in N(i)}\cos(\theta_i^b+\cancel{\bar{\theta}}-\theta_j^b-\cancel{\bar{\theta}})=f_1(\theta^b).
\end{equation*} 
Conversely, function $f_2$ is not rotational invariant. For instance, take $\bar{\theta}=\pi$; then,
\begin{equation*}
    f_2(\theta^a;\phi)=-\sum_{i\in I}\cos(\theta_i^a-\phi_i)=-\sum_{i\in I}\cos(\pi+\theta_i^b-\phi_i)=\sum_{i\in I}\cos(\theta_i^b-\phi_i)=-f_2(\theta^b;\phi).
\end{equation*} 
As a consequence, the objective function $f$ is not rotational invariant, unless we take $\Delta=0$.
\end{remark}
\par\smallskip\noindent
In general, $f_{\min}$ cannot be found by directly minimizing $f_1$ and $f_2$, as they cannot be minimized simultaneously. However, for sufficiently small or  sufficiently large values of $\Delta>0$,  only one term is prevailing. In these cases, we aim to show that the solutions found by minimizing that term are good approximations of the global minimum $f_{\min}$ in terms of optimality gap, as in the following definition:

\begin{definition}
\label{def:eps-glob-min}
Let $\theta \in \mathbb R^{L^d}$, and $\epsilon>0$. Let $f_{\min}$ be the global minimum of $f$. We say that $\theta$ is an \emph{$\epsilon$-global minimizer} of $f$ if
\begin{equation*}
\frac{f(\theta)-f_{\min}}{|f_{\min}|} < \epsilon.
\end{equation*}
\end{definition}
\noindent
Let $\epsilon \in(0,1)$, define the following two quantities:
\begin{equation*}
    \Delta_{\epsilon}^{(1)}:= d\frac{\epsilon}{2-\epsilon}, \quad\quad
    \Delta_{\epsilon}^{(2)}:= d\frac{2-\epsilon}{\epsilon}.
\end{equation*}
\begin{proposition}
Let $\epsilon \in(0,1)$, $\theta^{(1)} \in \argmin f_1(\theta)$, and $\theta^{(2)} \in \argmin f_2(\theta;\phi)$.
\begin{enumerate}
    \item If $0<\Delta<\Delta_{\epsilon}^{(1)}$, then $\theta^{(1)}$ is an $\epsilon$-global minimizer of $f$;
    \item If $\Delta>\Delta_{\epsilon}^{(2)}$, then $\theta^{(2)}$ is an $\epsilon$-global minimizer of $f$.
\end{enumerate}
\end{proposition}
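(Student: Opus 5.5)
The plan is to bound both the numerator and the denominator of the ratio in the definition of an $\epsilon$-global minimizer, using only the bounds \eqref{eqn:bounds} and the lower bound \eqref{eq:flow}. The key observation is that if $f_{\min}<0$ and $f(\theta)\le -c$ for some $c>0$, then
\begin{equation*}
\frac{f(\theta)-f_{\min}}{|f_{\min}|}=\frac{f(\theta)+|f_{\min}|}{|f_{\min}|}\le 1-\frac{c}{|f_{\min}|}\le 1-\frac{c}{(d+\Delta)L^d}.
\end{equation*}
The last step uses $|f_{\min}|\le (d+\Delta)L^d$, which follows from \eqref{eq:flow}. So it suffices to produce a good upper bound $-c$ on $f(\theta^{(k)})$ and to check that $f_{\min}<0$.

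\textbf{Case 1.} Since $\theta^{(1)}$ minimizes $f_1$, we have $f_1(\theta^{(1)})=-dL^d$. By \eqref{eqn:bounds}, $f_2(\theta^{(1)};\phi)\le L^d$. Hence
\begin{equation*}
f(\theta^{(1)})\le -(d-\Delta)L^d .
\end{equation*}
From $\epsilon<1$ we get $\Delta_\epsilon^{(1)}<d$, so $\Delta<d$ and $c=(d-\Delta)L^d>0$. Then $f_{\min}\le f(\theta^{(1)})<0$. The ratio is therefore bounded by
\begin{equation*}
1-\frac{d-\Delta}{d+\Delta}=\frac{2\Delta}{d+\Delta}.
\end{equation*}
Finally, $\frac{2\Delta}{d+\Delta}<\epsilon$ is equivalent to $\Delta(2-\epsilon)<\epsilon d$, i.e.\ $\Delta<\Delta_\epsilon^{(1)}$.

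\textbf{Case 2.} This is symmetric. We have $f_2(\theta^{(2)};\phi)=-L^d$ and $f_1(\theta^{(2)})\le dL^d$, so
\begin{equation*}
f(\theta^{(2)})\le -(\Delta-d)L^d .
\end{equation*}
Since $\epsilon<1$ gives $\Delta_\epsilon^{(2)}>d$, we have $\Delta>d$, so this quantity is negative and $f_{\min}<0$. The ratio is then at most
\begin{equation*}
1-\frac{\Delta-d}{d+\Delta}=\frac{2d}{d+\Delta},
\end{equation*}
which is $<\epsilon$ exactly when $d(2-\epsilon)<\epsilon\Delta$, i.e.\ $\Delta>\Delta_\epsilon^{(2)}$.

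\textbf{Main obstacle.} The only real difficulty is controlling the denominator $|f_{\min}|$. A naive lower bound on $|f_{\min}|$ (for instance $dL^d$, obtained by averaging $f_2$ over global rotations) would give $2\Delta/d$ and not match the stated threshold. Writing the ratio as $1-c/|f_{\min}|$ turns the \emph{upper} bound $|f_{\min}|\le (d+\Delta)L^d$ from \eqref{eq:flow} into the useful direction, and this produces exactly $\Delta_\epsilon^{(1)}$ and $\Delta_\epsilon^{(2)}$. The sign condition $f_{\min}<0$ needed for this rewriting comes for free from $f_{\min}\le f(\theta^{(k)})<0$.
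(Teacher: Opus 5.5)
Your proof is correct and follows essentially the same route as the paper: you bound $f(\theta^{(k)})$ from above via \eqref{eqn:bounds}, control $|f_{\min}|$ through $f_{low}=-(d+\Delta)L^d$, and reach the same quantities $\frac{2\Delta}{d+\Delta}$ and $\frac{2d}{d+\Delta}$. Writing the ratio as $1+f(\theta)/|f_{\min}|$ and checking that $f(\theta^{(k)})<0$ (which uses $\epsilon<1$) makes explicit the sign condition behind the paper's step $\frac{f(\theta)-f_{\min}}{|f_{\min}|}\le\frac{f(\theta)-f_{low}}{|f_{low}|}$; the paper relies on it without stating it, and it is needed because that step enlarges both the numerator and the denominator.
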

\begin{proof}
Take $\theta^{(1)} \in \argmin f_1(\theta)$, and $0<\Delta<\Delta_{\epsilon}^{(1)}$. Then, $f_1(\theta^{(1)})=\min f_1=-dL^d$. Moreover, by \eqref{eqn:bounds},  $f_2(\theta^{(1)};\phi)\le L^d$. Then, $f(\theta^{(1)})\le(-d+\Delta) L^d$. Also using \eqref{eq:flow}, it follows:
\[\frac{f(\theta^{(1)})-f_{\min}}{|f_{\min}|}\le\frac{f(\theta^{(1)})-f_{low}}{|f_{low}|}\le\frac{(-\cancel{d}+\Delta+\cancel{d}+\Delta)\cancel{L^d}}{(d+\Delta)\cancel{L^d}}=\frac{2\Delta}{d+\Delta}<\epsilon,\]
where the last inequality follows from $\Delta< d\frac{\epsilon}{2-\epsilon}$. Now, take $\theta^{(2)} \in \argmin f_2(\theta;\phi)$, and $\Delta>\Delta_{\epsilon}^{(2)}$. Then, $f_2(\theta^{(2)};\phi)=\min f_2=-L^d$. Moreover, by \eqref{eqn:bounds},  $f_1(\theta^{(2)})\le dL^d$. Then, $f(\theta^{(2)})\le(d-\Delta) L^d$. Also using \eqref{eq:flow}, it follows:
\[\frac{f(\theta^{(2)})-f_{\min}}{|f_{\min}|}\le\frac{f(\theta^{(2)})-f_{low}}{|f_{low}|}\le\frac{(d-\cancel{\Delta}+d+\cancel{\Delta})\cancel{L^d}}{(d+\Delta)\cancel{L^d}}=\frac{2d}{d+\Delta}<\epsilon,\]
where the last inequality follows from $\Delta> d\frac{2-\epsilon}{\epsilon}$.
\qedhere
\end{proof}

\par\medskip\noindent
The previous result does not provide an optimal solution. However, for $\Delta\in (0,\Delta_{\epsilon}^{(1)})\cup(\Delta_{\epsilon}^{(2)},\infty)$ it provides an $\epsilon$-global minimizer according to Definition \ref{def:eps-glob-min}, thereby making the problem tractable, at least up to numerical precision.
Instead, if
$\Delta\in [\Delta_{\epsilon}^{(1)},\Delta_{\epsilon}^{(2)}]$, the problem is not straightforward, as the two terms play comparable roles. In particular, define 
\begin{equation*}
\bar{\Delta}:=\frac{\min f_1}{\min f_2}=\frac{-dL^d}{-L^d}=d;
\end{equation*}
it is reasonable that for values of $\Delta$ close to $\bar{\Delta}$, the contributions of the two terms may be very similar. For instance, for $\Delta=\bar \Delta$, $f_1(\theta^{(1)})=\Delta f_2(\theta^{(2)};\phi)=-\bar \Delta L^d$. Proliferation of local minima is expected, with this phenomenon presumably being more noticeable for values of $\Delta\in[\Delta_{\epsilon}^{(1)},\bar \Delta]$ than for  $\Delta\in(\bar\Delta,\Delta_{\epsilon}^{(2)}]$, since, as already noted, there are infinite minimizers of $f_1$ but a unique minimizer of $f_2$.

\section{The equivalent optimization problem on Riemannian manifold}
\label{sec:manifolds}
For each site $i\in I$ we consider two-component unit vectors $x_i=(\cos\theta_i,\sin\theta_i)^T$ and $h_i=(\cos\phi_i,\sin\phi_i)^T$.
We introduce the following $2 \times L^d$ matrices:
\[X:=\bigg[x_1\bigg|\dots\bigg|x_{L^d}\bigg] \quad \text{and} \quad
H:=\bigg[h_1\bigg|\dots\bigg|h_{L^d}\bigg] \in \mathbb R^{2 \times L^d},\]
where every column of $X$ and $H$ has unit norm, i.e., $\|h_i\|=\|x_i\|=1$.
We can rewrite the objective function in \eqref{eqn:function} as
\begin{equation}
	\label{objf}
    f(X):=-\frac{1}{2}\sum_{i\in I}\sum_{j\in N(i)}x_j^Tx_i-\Delta \sum_{i\in I}h_i^Tx_i.
\end{equation}
Then, we consider the following optimization problem
\begin{equation}\label{manifold}
\min_{X\in \mathcal M}f(X),
\end{equation}
where $\mathcal M$ is the \textit{oblique manifold} defined as follows:
$$
\mathcal M
=\bigg\{X=\bigg[x_1\bigg|\dots\bigg|x_{L^d}\bigg]\in\mathbb R^{2\times L^d} : \|x_i\|=1 \text{ for all } i=1,\dots,L^d\bigg\}.
$$
Note that $\mathcal M$
is the Cartesian product of $L^d$ spheres $S^{1}$, i.e., $\mathcal M=S^{1}\times\ldots\times S^{1}$.

\par\medskip\noindent
The Euclidean gradient takes the following form  \[\nabla f(X)=\bigg[\nabla_1 f(X)\bigg|\dots\bigg|\nabla_{L^d} f(X)\bigg]\in\mathbb{R}^{2\times L^d},\] where:
\begin{equation*}
\nabla_i f(X) = -\sum_{j \in N(i)}x_j - \Delta h_i.
\end{equation*}
We introduce the Euclidean Hessian operator $\mathcal H_f(X): \mathbb R^{2 \times L^d} \to \mathbb R^{2 \times L^d}$. For every $V=\big[v_1\big|\dots\big|v_{L^d}\big] \in \mathbb R^{2 \times L^d}$, we have
\[\mathcal H_f(X)[V]=\bigg[\nabla^2_1 f(X)[V]\bigg|\dots\bigg|\nabla^2_{L^d} f(X)[V]\bigg]\in\mathbb{R}^{2\times L^d},\] where:
\begin{equation*}
\nabla^2_i f(X)[V] = -\sum_{j \in N(i)}v_j.
\end{equation*}
As the operator does not depend on $X$, we can lighten the notation and only write $\mathcal H_f$.

\begin{remark}\label{indef}
The Hessian operator $\mathcal H_f: \mathbb R^{2 \times L^d} \to \mathbb R^{2 \times L^d}$ is indefinite.
Indeed, for every $V\in \mathbb R^{2\times L^d}$ we can write
\begin{equation*}
\langle V, \mathcal H_f [V]\rangle_{F}=-\sum_{i\in I}\sum_{j\in N(i)}v_i^Tv_j.
\end{equation*}
Consider a matrix $V=\big[v_1\big|\dots\big|v_{L^d}\big]\in \mathbb{R}^{2 \times L^d}$, where $v_i=\bar{v}\in \mathbb{R}^2$ ($\bar{v}\not=0$) for all $i=1,\dots,L^d$.
Then, we obtain $\langle V, \mathcal H_f[V]\rangle_{F}=-2dL^d\bar v^T\bar v<0$.
Now, choose two indices $\bar i$ and $\bar j$ such that $\bar j \in N(\bar i)$ and take a matrix $V=\big[v_1\big|\dots\big|v_{L^d}\big]\in \mathbb{R}^{2 \times L^d}$, where $v_{\bar i}=\bar{v}$, $v_{\bar j}=-\bar{v}$ (for some $\bar{v}\not=0$) and $v_i=0$ for all $i\in\{1,\dots,L^d\}\setminus\{\bar i, \bar j\}$. We obtain $\langle V, \mathcal H_f[V]\rangle_{F}=-2v_{\bar i}^Tv_{\bar j}=2\bar v^T\bar v>0$. Therefore, we can conclude that $\mathcal H_f$ is indefinite. As a consequence, we have that the function $f$ defined by (\ref{objf}) is neither concave nor convex over $\mathbb R^{2 \times L^d}$.
\end{remark}

\par\noindent
For the theory and algorithms of optimization on manifolds, we refer the reader to the following references
 \cite{boumal2023intromanifolds,cartis,MatrixManifolds}.

\section{Problem-specific Basin Hopping}
\label{sec:BH}

In this section, we first present the Basin Hopping (BH) \cite{Wales1998, Locatelli2013} framework and then we propose three perturbation strategies specifically designed for the optimization problem under consideration. Each strategy defines a different BH variant, which is subsequently extended to a corresponding Population Basin Hopping (PBH) algorithm, in which a set of candidate solutions is jointly evolved instead of a single one in order to assess whether the introduction of population-based mechanisms provides an improvement over the corresponding single-point variants.\\

\subsection{Algorithmic scheme}
In Algorithm \ref{alg:MBH}, we report the scheme of the BH method which constitutes the global optimization framework adopted in this work. After a local solution is obtained, BH generates a perturbation of the current point and uses it as a new starting point for the local solver. If the perturbed and locally optimized point yields an improvement, the current point is updated accordingly; otherwise, it is discarded and the procedure is repeated until the stopping criterion is satisfied. In the following, $\mathcal G()$ denotes the generation of a random starting point, $\mathcal L(X,f)$ denotes a local solver applied to $f$ starting from $X$, and $\mathcal P(X)$ denotes a perturbed point generated from $X$.

\begin{algorithm}[H]
\caption{BH Algorithm}
\label{alg:MBH}
\begin{algorithmic}[1]
\State $X \gets \mathcal G()$ \Comment{random starting point}
\State $X \gets \mathcal L(X, f)$ \Comment{local optimum}
\State $f_{\text{best}} \gets f(X)$
\While{\text{stopping criterion not satisfied}} \Comment{start of BH} 
    \State $Y \gets \mathcal P(X)$ \Comment{perturbation}
    \State $Y \gets \mathcal L(Y,f)$ \Comment{local optimization}
    \If{$f(Y) < f(X)$} \Comment{improvement}
        \State $X \gets Y$
        \State $f_{\text{best}} \gets f(X)$ 
    \EndIf
\EndWhile \Comment{end of BH}

\end{algorithmic}
\end{algorithm}

\subsection{Perturbation strategies}

\par\medskip\noindent
Different implementations of the perturbation operator $\mathcal P(x)$ are considered in this work. The first is a standard random perturbation, whereas the other two variants are novel and specifically designed for the problem at hand, exploiting both the structure of the objective function and the underlying geometry of the problem.

\par\medskip\noindent
\textit{Strategy 1: random perturbation.}
The first strategy consists of a simple random perturbation in which the new point is generated by sampling in the neighborhood of the current solution. Let $\eta^{\text{max}}>0$ be a constant, and let $\theta = (\theta_1,\dots,\theta_{L^d})^T \in \mathbb R^{L^d}$ denote the current point in the angular formulation. For each $i=1,\dots,L^d$, let $\eta_i \sim \mathcal N(0, \eta^{\text{max}})$ be an independent Gaussian random variable. The perturbed point is then obtained component-wise as
\begin{equation*}
 \theta^{\text{new}}_i = \theta_i + \eta_i.   
\end{equation*}
\par\medskip\noindent
\textit{Strategy 2: spatial perturbation.} The second strategy introduces spatial correlation into
the perturbation, as opposed
to the site-independent noise of Strategy~1. Let $A \in \mathbb{R}^{L^d \times L^d}$ be the adjacency
matrix of the lattice, i.e.\ $A_{i,j}=1$ if $j\in N(i)$ and $0$ otherwise. We define the row-stochastic \textit{diffusion operator}
\begin{equation*}
W = \frac{1}{2d}A;
\end{equation*}
we trivially observe that, for every $i=1,\dots,L^d$, $\sum_{j=1}^{L^d} W_{i,j}=1$. Moreover, for any vector $v\in\mathbb R^{L^d}$, the $i$-th entry of $W\!v$ is the average of $v_j$ over
$j\in N(i)$: applying $W$ therefore mixes each site's value with that of its neighbors. Starting from an isotropic Gaussian noise vector $\xi \sim \mathcal N(0, \mathcal{I}_{L^d})$, with $\mathcal{I}_{L^d}$ being the identity matrix of size $L^d \times L^d$ and $\xi \in \mathbb
R^{L^d}$, we apply one step of diffusion on the lattice graph to obtain a spatially correlated
fluctuation:
\begin{equation*}
\tilde\eta = W\xi.
\end{equation*}
Since diffusion averages the noise over neighboring sites, its amplitude shrinks with respect to that
of $\xi$; to make the perturbation strength comparable to that of Strategy~1 and independent of the
system size $L^d$, we rescale $\tilde\eta$ with respect to its maximum absolute component, i.e.\ using
the infinity norm:
\begin{equation*}
\eta = \eta^{\text{max}}\cdot\frac{\tilde\eta}{\|\tilde\eta\|_\infty}, \qquad \|\tilde\eta\|_\infty =
\max_{i=1,\dots,L^d}|\tilde\eta_i|.
\end{equation*}
The perturbed point is then obtained on the manifold $\mathcal M$ (rather than in angular form) via a
local rotation of each spin: recalling that every $x_i=(\cos\theta_i,\sin\theta_i)^T$ satisfies
$\|x_i\|=1$, the perturbation acts as the rotation
\begin{equation*}
x_i^{\text{new}} = R(\eta_i)\,x_i = \begin{pmatrix}\cos\eta_i & -\sin\eta_i\\ \sin\eta_i &
\cos\eta_i\end{pmatrix}x_i.
\end{equation*}
Since $R(\eta_i)$ is a rotation matrix, $\|x_i^{\text{new}}\|=\|x_i\|=1$ for every $i=1,\dots,L^d$, so
the perturbed point lies on $\mathcal M$ by construction and no retraction step is required after the
perturbation.
\par\medskip\noindent
Note that Strategies 1 and 2 share the same underlying identity,
$R(\eta_i)x_i = (\cos(\theta_i+\eta_i), \sin(\theta_i+\eta_i))^T$, so casting the perturbation as a rotation on $\mathcal M$ rather
than as an angular shift is not itself a source of difference. The real differences are the diffusion
step, absent in Strategy~1 (equivalent to $\tilde\eta=\xi$), and the resulting need for the
infinity-norm normalization: since $W$ averages neighboring entries, the extremes of $\tilde\eta$
shrink as $L^d$ grows, so rescaling by $\|\tilde\eta\|_\infty$ is required to keep $\eta^{\text{max}}$ a
size-independent perturbation scale, as it already is in Strategy~1.

\par\medskip\noindent
\textit{Strategy 3: alternating perturbation.}
The third strategy exploits the physical structure of the problem by accounting for both the interaction among neighboring lattice sites and the site-dependent external field. Rather than perturbing each angle by an independent random increment, this strategy displaces each angle towards a target direction, alternating between two rules at successive calls of $\mathcal P(x)$.
Let $\eta^{\text{max}}>0$ be fixed and let $\eta_i\sim\mathcal U(0,\eta^{\mathrm{max}})$ independently for $i=1,\ldots,L^d$. At even applications of $\mathcal P(x)$, each angle is displaced toward the circular mean of the neighboring spins according to
\begin{equation*}
\theta^{\text{new}}_i = \theta_i + \eta_i(\bar\theta_i - \theta_i), \qquad i = 1,\dots,L^d,   
\end{equation*}
where
\begin{equation*}
    \bar\theta_i = \operatorname{atan2}\!\left(\frac{1}{|{N}(i)|}\sum_{j\in{N}(i)}\sin\theta_j,\; \frac{1}{|{N}(i)|}\sum_{j\in{N}(i)}\cos\theta_j\right). 
\end{equation*}
At odd applications, each angle is instead displaced toward the corresponding external field direction,
\begin{equation*}
    \theta^{\text{new}}_i = \theta_i + \eta_i(\phi_i - \theta_i), \qquad i = 1,\dots,L^d.
\end{equation*}
This alternation is designed to balance the exploration induced by the local coupling structure of the lattice with that induced by the external field. Consequently, the generated perturbations remain consistent with the underlying physical model while preserving sufficient diversity to facilitate escapes from poor local minima.

\subsection{Population-based variants}

The perturbation strategies proposed for the BH framework were also evaluated within the population-based version of the algorithm. In Algorithm \ref{alg:PBH}, we report the scheme of the PBH method \cite{Grosso2007, Locatelli2013}, an extension of BH that operates on a population of candidate solutions rather than on a single point. The method starts from a population $\mathcal{X}$ of randomly generated starting points, each of which is locally optimized to obtain an initial population of local optima. At each iteration, every point $X_p \in \mathcal{X}$ is perturbed and locally optimized to produce a candidate point $Y_p$, which is collected into a candidate set $\mathcal{Y}$; this step plays the same role as the perturbation-and-local-optimization step of BH, but is applied ``in parallel'' to every member of the population rather than to a single current point. 

\begin{algorithm}[H]
\caption{PBH Algorithm}
\label{alg:PBH}
\begin{algorithmic}[1]
\State $\mathcal{X} \gets \{X_p \mid X_p = \mathcal{G}()\}$ \Comment{random starting population}
\ForAll{$X_p \in \mathcal{X}$}
    \State $X_p \gets \mathcal L(X_p, f)$ \Comment{local optimum}
\EndFor
\State $f_{\text{best}} \gets \min\{f(X_p) \mid X_p \in \mathcal{X}\}$
\While{\text{stopping criterion not satisfied}} \Comment{start of PBH} 
    \State $\mathcal{Y} \gets \emptyset$ \Comment{initialization of the candidate points sets}
    \ForAll{$X_p \in \mathcal{X}$}
        \State $Y_p \gets \mathcal P(X_p)$ \Comment{perturbation}
        \State $\mathcal{Y} \gets \mathcal{Y} \cup \{\mathcal L(Y_p,f)\}$ \Comment{local optimization}
    \EndFor
    \State $\mathcal{X} \gets \textit{Select}(\mathcal{X}, \mathcal{Y})$ \Comment{selection of the points for the next population}
    \State $f_{\text{best}} \gets \min\{f(X_p) \mid X_p \in \mathcal{X}\}$
\EndWhile \Comment{end of PBH}

\end{algorithmic}
\end{algorithm}
\par\medskip\noindent
Once the candidate set $\mathcal{Y}$ has been generated, the population for the next iteration is obtained through a selection step, whose scheme is reported in Algorithm \ref{alg:PBH_Select}. In this step, we determine, for each candidate $Y_p \in \mathcal{Y}$, the nearest point $X_c$ in the current population with respect to the geodesic distance $d(\cdot,\cdot)$ on $\mathcal M$ \cite{boumal2023intromanifolds}, i.e.,
\begin{equation}
\label{eq::geodesic_distance}
d(X,Y) = \sqrt{\sum_{i=1}^{L^d} \arccos\!\big(x_i^T y_i\big)^2};
\end{equation}
if this dissimilarity exceeds a threshold $D_{\mathrm{cut}}$, $X_c$ is instead replaced by the worst point of the population in terms of objective function. In this way, candidates close to existing solutions compete locally against their nearest neighbor, while genuinely novel candidates compete against the weakest member of the population. In either case, $X_c$ is replaced by $Y_p$ only if the latter yields an improvement. This mechanism allows PBH to maintain diversity within the population, favoring the exploration of multiple regions of the energy landscape simultaneously, while still allowing improving candidates to displace weaker solutions regardless of their similarity to the current population. The entire PBH procedure is repeated until a stopping criterion is satisfied.

\begin{algorithm}[H]
\caption{\textit{Select} function for PBH}
\label{alg:PBH_Select}
\begin{algorithmic}[1]
\ForAll{$Y_p \in \mathcal{Y}$}
    \State $c \in \argmin_{i \in \{1,\ldots,|\mathcal{X}|\}} d(X_i, Y_p)$ \Comment{$d(\cdot, \cdot)$: geodesic distance on $\mathcal{M}$} 
    \If{$d(X_c, Y_p) > D_{\text{cut}}$} \Comment{$D_{\text{cut}}$: threshold distance}
    \State $c \in \argmax_{i \in \{1,\ldots,|\mathcal{X}|\}} f(X_i)$
    \EndIf
    \If{$f(Y_p) < f(X_c)$} \Comment{improvement}
    \State $\mathcal{X} \gets \mathcal{X} \setminus \{X_c\} \cup \{Y_p\}$
    \EndIf
\EndFor

\end{algorithmic}
\end{algorithm}

\section{Computational study}

\label{sec:results}

In this section, we report the results of  computational experiments aimed at assessing the quality and consistency of the proposed global optimization approaches for the Riemannian reformulation of the considered problem. The implementation of the presented algorithms can be found in the GitHub repository\footnote{available at \href{https://github.com/LorenzoCiarpa/RFXYBH-Riemann}{\tt https://github.com/LorenzoCiarpa/RFXYBH-Riemann}}, where all experimental outputs and the considered external fields can be downloaded, along with a table with all putative global minima found for each configuration. All experiments were performed on a machine running Ubuntu 24.04 OS, equipped with an Intel(R) Core(TM) i5-10600KF processor (6 cores, 4.10 GHz) and 32 GB of RAM.
\par\medskip\noindent
The experimental setup considers three-dimensional lattices ($d=3$) with system sizes $L\in\{10,15,20,25,32\}$, different values of the parameter $\Delta \in \{1.0,2.0,2.5,3.0,4.0\}$, and five different realizations of the external field, denoted by $h_i$, $i=1,\dots,5$. Each realization is generated by independently sampling the field values from the uniform distribution $\mathcal{U}(0,2\pi)$.
\par\medskip\noindent
The BH and PBH frameworks have been tested using all three perturbation strategies introduced in Section \ref{sec:BH}. In the experimental results, these variants are identified through a suffix appended to the algorithm name: ``rnd'' for the random-based perturbation strategy (Strategy 1), ``spa'' for the spatial perturbation strategy (Strategy 2), and ``alt'' for the alternating perturbation strategy (Strategy 3). While PBH naturally maintains a population of solutions, BH has been evaluated in a multistart configuration. The proposed approaches have been compared with a MultiStart (MS) version of the Riemannian local solver, the Differential Evolution (DE) \cite{Storn1997} evolutionary algorithm, which is widely recognized in the literature as one of the most effective evolutionary approaches for highly irregular optimization problems, and two heuristics commonly adopted in the physics literature to solve the standard unconstrained formulation of the problem, namely Simulated Annealing (SA) \cite{doi:10.1126/science.220.4598.671} and Parallel Tempering (PT) \cite{Hukushima1996,marinari1992}, both implemented in a multistart configuration. 
\par\medskip\noindent
The algorithmic parameters were selected through preliminary experiments, not reported here for the sake of brevity and performed on a subset of the considered problem instances. For the three (P)BH perturbation strategies, the maximum perturbation magnitude was set to $\eta_{\text{max}}=0.5$ for Strategy 1, $\eta_{\text{max}}=2$ for Strategy 2, and $\eta_{\text{max}}=0.3$ for Strategy 3. The different values of $\eta_{\text{max}}$ are expected, since the three perturbation mechanisms differ substantially. For Strategy 3, a more conservative value was found to be preferable because the perturbations explicitly exploit structural information from both components of the objective function, making them inherently more directed. In contrast, Strategy 2 benefits from a larger value of $\eta_{\text{max}}$, as the perturbation is subsequently normalized after the diffusion step, which effectively controls its magnitude while preserving the induced spatial correlations. Finally, for the purely random perturbation of Strategy 1, an intermediate value of $\eta_{\text{max}}=0.5$ provided the best balance between exploration and preserving the quality of the current solution.
For the PBH variants, the cutoff distance $D_{\text{cut}}$ was dynamically defined according to the initial population, namely as the average value of the geodesic distance \eqref{eq::geodesic_distance} computed over all pairs of solutions belonging to the initial population. For all BH and PBH variants and MS, we employed the \texttt{trust\_regions} routine provided by the Python package \texttt{pymanopt} as the local Riemannian optimizer \cite{pymanopt}, as discussed in Appendix \ref{sec:app_local}; a local search was terminated when the norm of the Riemannian gradient became lower than or equal to $10^{-6}$ or when the number of inner iterations reached $10,\!000$. For DE, the mutation and crossover parameters were set to $F=0.1$ and $CR=0.5$, respectively, while the threshold parameter used to determine whether the population had collapsed to a single solution was set to $10^{-4}$. For SA, the initial temperature was set to $T=T_{\text{max}}=2$, and it was decreased at each iteration according to $T_{it+1}=0.99T_{it}$; once the minimum temperature $T_{\text{min}}=10^{-3}$ was reached, the algorithm was allowed to terminate. For PT, the minimum and maximum temperatures were set to $T_{\text{min}}=0.1$ and $T_{\text{max}}=2.5$, respectively. Except for the multistart solver, all algorithms were executed using an initial population of $10$ solutions, whose values were randomly sampled from the uniform distribution $\mathcal{U}(0,2\pi)$. The MS approach was instead initialized with $500$ additional random solutions. 
\par\medskip\noindent
Since all tested algorithms include random components, each method was executed $5$ times on every problem instance using five different seeds for the pseudo-random number generator. Due to the structural differences among the tested approaches, each run was limited to a maximum execution time of $2$ minutes, unless an algorithm-specific stopping criterion was reached earlier indicating that no further improvement of the current solutions was expected. Since DE, SA, and PT do not employ any local solver, these methods were granted an additional two-minute execution budget to compensate for the absence of local refinement. 
\par\medskip\noindent
The algorithms were mainly compared according to two metrics: the objective value of the returned solution, denoted as $f^\star$, and the number of iterations performed by the local solver, normalized with respect to the number of iterations executed by each algorithm and denoted as $it_{ls}^{m}$.
\par\medskip\noindent
To summarize the results in terms of solution quality, we report the cumulative distribution of relative gap to the optimal value $\frac{|f-f^\star|}{|f^\star|}$. More specifically, we consider the cumulative distribution function of the relative gap between the average score achieved by each solver over the five independent runs and the best score obtained by any solver in any run. This type of representation is more suitable than standard performance profiles \cite{Dolan2002}, since the considered metric may assume both positive and negative values and does not represent an absolute computational cost. Performance profiles are instead employed to provide a compact comparison of the algorithms in terms of $it_{ls}^{m}$. Following the approach in \cite{Lapucci2026}, and in order to highlight the sensitivity of the solvers to stochastic effects, we additionally report cumulative distributions and performance profiles based on the best and worst outcomes obtained by each solver over the five independent runs. These curves define a shaded region around the distribution based on the average performance, providing a visual representation of the variability induced by the random initialization and random components of the algorithms. For all plots, the reference value used to construct the distributions and performance measures corresponds to the overall best solution obtained among all tested algorithms and all independent runs. 
\par\medskip\noindent
Finally, we employ an additional graphical representation showing, for a given number of iterations, the percentage of problem instances for which each algorithm achieves a better solution than its competitors. This analysis provides further insight into the practical efficiency and effectiveness of the tested methods throughout the optimization process, complementing the comparisons based only on the final solutions.

\subsection{Computational efficiency: manifold versus unconstrained formulation}\label{sec:computational efficiency}

In this section, we report results motivating our choice of the Riemannian manifold reformulation of the considered problem with respect to the standard unconstrained formulation. This first analysis is, in our opinion, particularly relevant, as it provides empirical evidence of the advantages offered by the proposed Riemannian formulation. 
\par\medskip\noindent
Since both formulations optimize the same objective function but rely on different geometric representations, this comparison highlights the computational advantages provided by the manifold approach. In the unconstrained case, the evaluation of the cost function, the (Euclidean) gradient, and the (Euclidean) Hessian applied to a vector requires the computation of trigonometric functions, whereas in the manifold approach these quantities involve mainly dot products between vectors. We further observe that the manifold formulation is quadratic, so that its Euclidean Hessian is constant and independent of the current point. 
Moreover, its feasible set is compact, so the iterates remain bounded by construction. By contrast, the angular formulation is periodic and unbounded in the variables. In the latter, nothing prevents the iterates from drifting towards large angular values, where the evaluation of trigonometric functions requires an argument reduction step, thus making each evaluation of the objective function and of its derivatives progressively more expensive. Consequently, the evaluations of the cost function, gradient, and Hessian applied to a vector are more expensive in the unconstrained case.
\par\medskip\noindent
Table~\ref{tab:times_f_grad} reports statistics on the mean computation times of these quantities in both the manifold and the unconstrained formulations for $L \in \{10, 20, 32\}$, where each reported value is averaged over 200 independent evaluations. For instance, a $6\times$, $9\times$, and $12\times$ speed-up can be observed in the evaluation of the cost function, gradient, and Hessian applied to a vector, respectively, when moving from the unconstrained to the manifold formulation with $L = 32$.
Moreover, for the manifold formulation we report the total time required to evaluate the Riemannian gradient (which consists of computing the Euclidean gradient and projecting it onto the tangent space) and the Riemannian Hessian (which includes evaluating the Euclidean gradient, the Euclidean Hessian, and additional operations). In addition, since each new point generated by an algorithm involves both evaluating the cost function and performing a retraction, we report the total time for these operations. As shown in the table, even when including these additional operations, the computation times remain lower than those of the unconstrained counterparts.

\begin{table}[t]
\caption{{Mean times (in seconds) of cost function, (Euclidean) gradient and Hessian evaluation in the manifold and in the unconstrained formulation with $L \in \{10, 20, 32\}$. For the manifold formulation, the total time of cost evaluation and retraction, Riemannian gradient and Hessian applied to a vector are reported.}}\label{tab:times_f_grad}
\begin{tabular*}{\textwidth}{@{\extracolsep\fill}cccccccc}
\toprule
 & \multicolumn{2}{c}{\textbf{$L=10$}} & \multicolumn{2}{c}{\textbf{$L=20$}} & \multicolumn{2}{c}{\textbf{$L=32$}} \\\cmidrule{2-3}\cmidrule{4-5}\cmidrule{6-7}
operation & manifold & unconstr. & manifold & unconstr. & manifold & unconstr. \\[1pt]
\midrule
\\
cost & 3.76e-05 & 1.02e-04 & 1.39e-04 & 8.48e-04 & 5.83e-04 & 3.39e-03 \\
cost + retraction & 4.64e-05 & -- & 1.75e-04 & -- & 8.67e-04 & -- \\[5pt]
\hline
\\
Euclidean gradient & 2.69e-05 & 1.02e-04 & 1.01e-04 & 8.46e-04 & 3.95e-04 & 3.72e-03 \\
Riemannian gradient & 3.26e-05 & -- & 1.30e-04 & -- & 5.12e-04 & -- \\[5pt]
\hline
\\
Euclidean Hessian & 2.13e-05 & 1.44e-04 & 8.33e-05 & 1.20e-03 & 4.73e-04 & 5.50e-03 \\
Riemannian Hessian & 5.92e-05 & -- & 2.35e-04 & -- & 1.19e-03 & -- \\[3pt]
\bottomrule
\end{tabular*}
\end{table}

\subsection{Comparison of perturbation mechanisms in BH and PBH}
We first compare the BH variants introduced in Section \ref{sec:BH}, which differ only in the perturbation strategy employed to escape from local minima. The aim of this analysis is to evaluate the effectiveness of each perturbation mechanism and to identify the most competitive configuration to be used in the following comparisons. The same analysis is then repeated for the corresponding PBH variants, in order to verify whether the relative performance of the perturbation strategies is preserved in the population-based setting. Figures \ref{fig:PP_BH}--\ref{fig:PP_PBH} report the results for BH and PBH, respectively, from three complementary perspectives: (a) the cumulative distribution of the relative gap of the final objective value with respect to the best value found by any of the solvers; (b) the performance profile with respect to the average number of local solver iterations per outer iteration; (c) the percentage of problems for which an algorithm attains a better objective value within a given number of iterations. 
For both BH and PBH, the alternating variant generally reaches solutions with better objective values than the other two variants (which appear to perform almost equivalently), as evidenced by both (a) and (c). However, at each outer iteration, it requires a larger number of local solver iterations, resulting in worse performance according to the corresponding performance profile. Nevertheless, note the short scale of the $\tau$-axis: even in the worst case, the average number of local solver iterations required by the alternating variant is less than three times that of the best-performing variant.
Moreover, this behavior should be interpreted from a different perspective. Indeed, not only may the additional computational effort explain the better objective values achieved by the alternating variant, but, since the total computational time is the same for all variants, it also suggests that the other two variants spend more time performing more ineffective perturbations than the alternating variant. 

\begin{figure}
    \subfloat[Cumulative distribution of the relative gap of the final objective value with respect to the best
value found by any of the solvers.]{\includegraphics[width=0.33\textwidth]{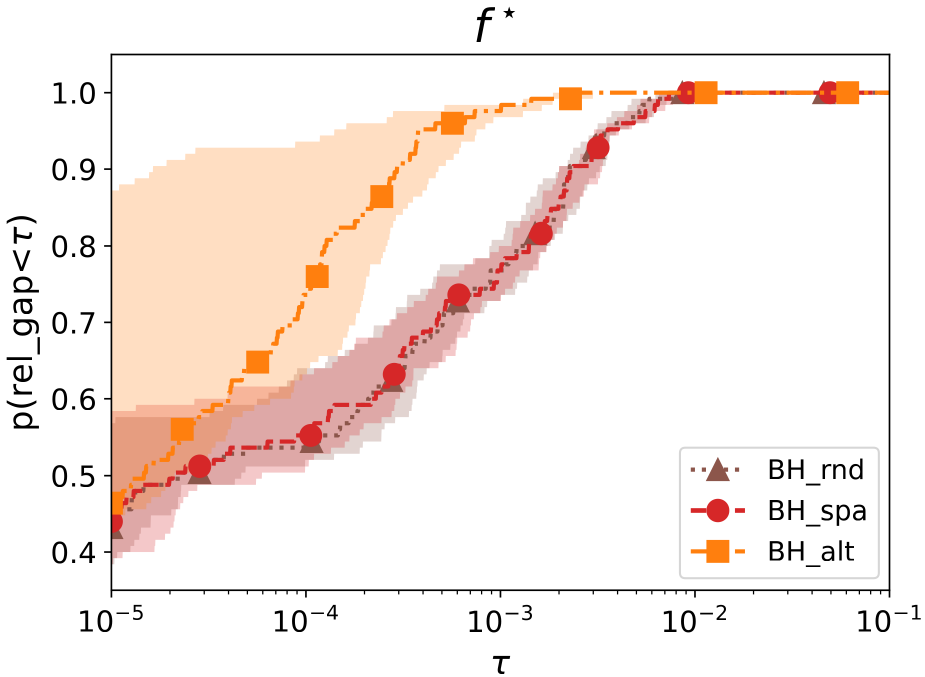}}
    \hfill
    \subfloat[Performance profile with respect to the average number
of local solver iterations per outer iteration.]{\includegraphics[width=0.33\textwidth]{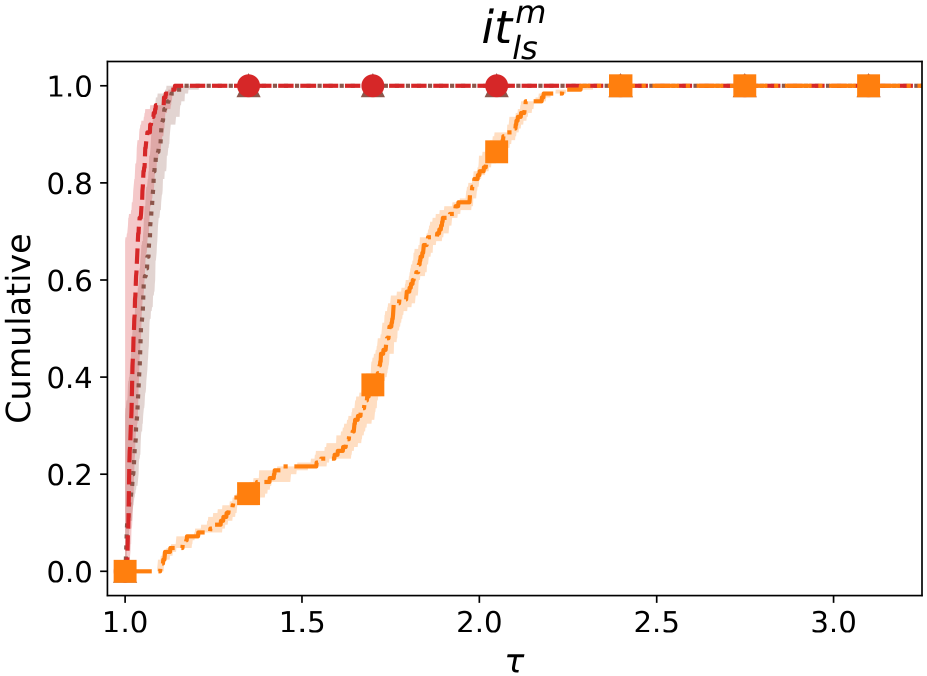}}
    \hfill
    \subfloat[Percentage of problems for which an algorithm attains a better objective value within a given number of iterations.]{\includegraphics[width=0.3025\textwidth]{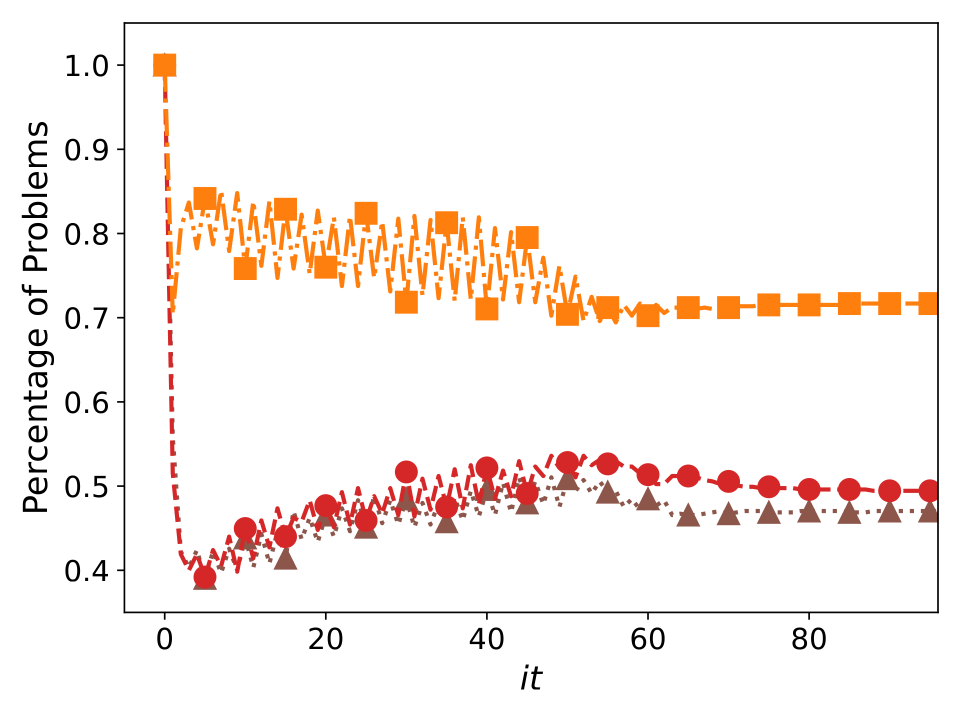}}
    \caption{Results of the comparison between the three variants of Basin Hopping: random (BH\_rnd), spatial (BH\_spa), alternating (BH\_alt).}
    \label{fig:PP_BH}
\end{figure}

\begin{figure}
    \subfloat[Cumulative distribution of the relative gap of the final objective value with respect to the best
value found by any of the solvers.]{\includegraphics[width=0.33\textwidth]{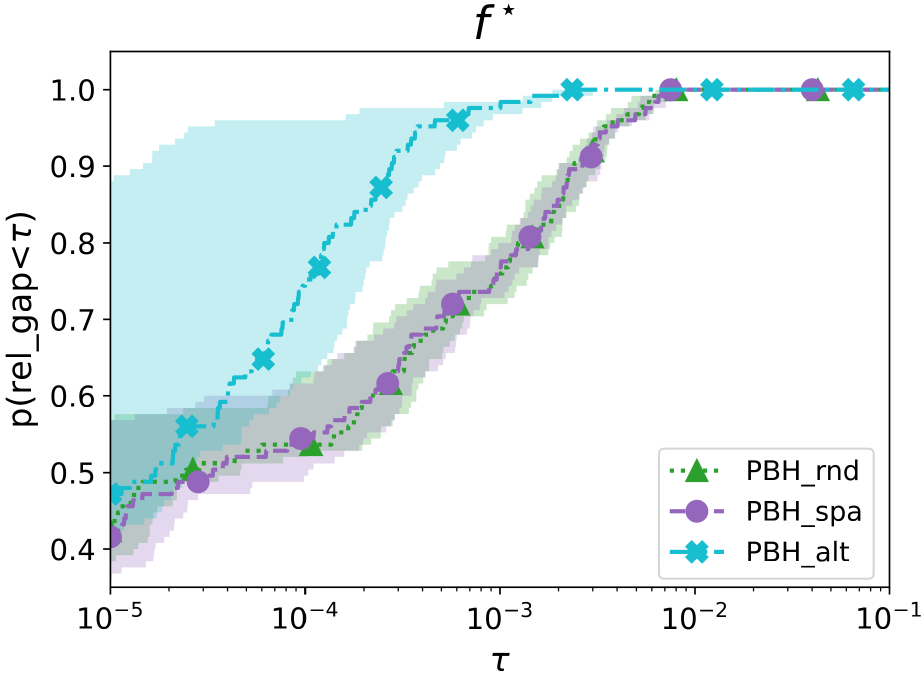}}
    \hfill
    \subfloat[Performance profile with respect to the average number of local solver iterations per outer iteration.]{\includegraphics[width=0.33\textwidth]{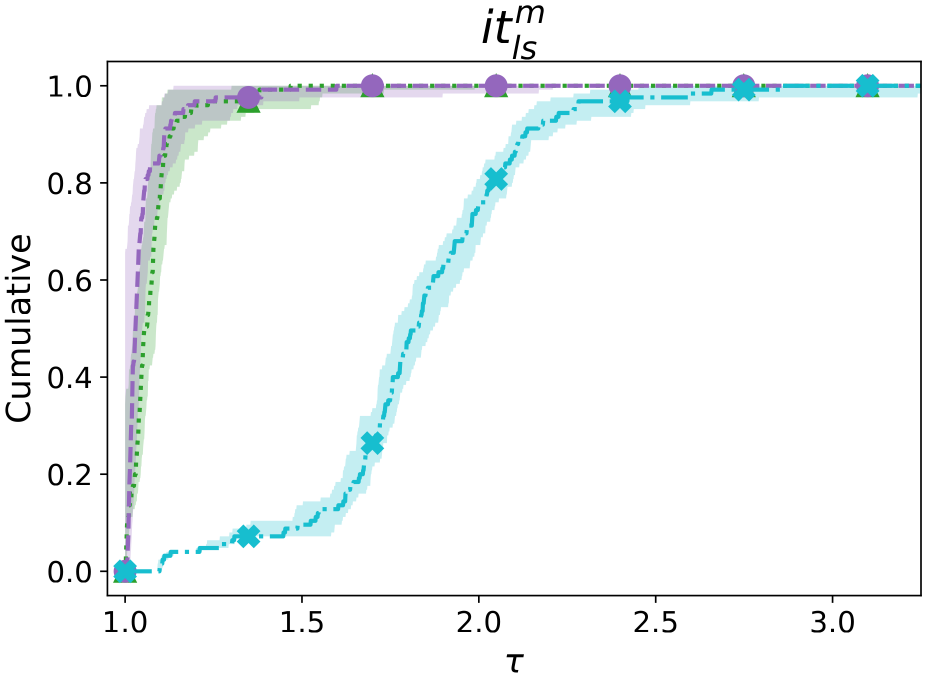}}
    \hfill
    \subfloat[Percentage of problems for which an algorithm attains a better objective value within a given number of iterations.]{\includegraphics[width=0.305\textwidth]{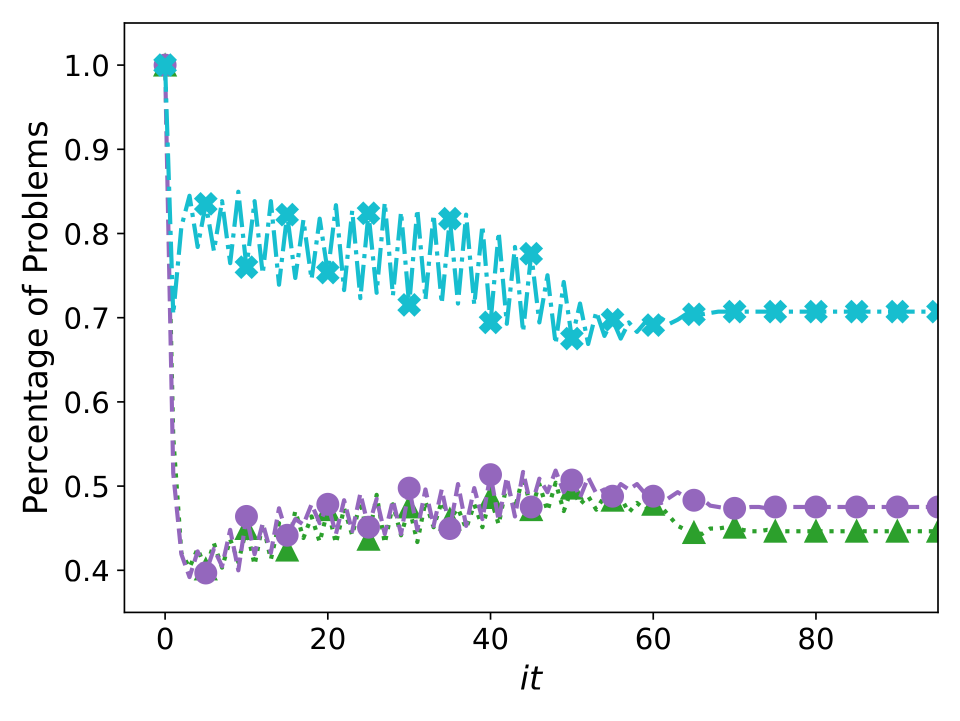}}
    \caption{Results of the comparison between the three variants of Population Basin Hopping: random (PBH\_rnd), spatial (PBH\_spa), alternating (PBH\_alt).}
    \label{fig:PP_PBH}
\end{figure}

\subsection{Comparison with the MultiStart baseline}

We now compare the best-performing BH and PBH variants identified above with a MS strategy. Such a comparison allows us to quantify the benefit provided by the more elaborate exploration mechanisms of BH and PBH with respect to this baseline strategy. We consider the alternating variants (BH\_alt and PBH\_alt, respectively) since, as highlighted in the previous subsection, they generally achieve better objective values than the other variants. Figure \ref{fig:MS} compares MS with BH\_alt and PBH\_alt from two complementary perspectives. On the one hand, the cumulative distribution of the relative gap of the final objective value shows that MS generally attains higher objective values than both BH\_alt and PBH\_alt, whose performances are very similar. On the other hand, the performance profile with respect to the average number of local solver iterations per outer iteration indicates that MS requires substantially more iterations than both BH\_alt and PBH\_alt, with the former performing slightly better than the latter.
The poor performance of MS is also evident from Tables \ref{tab:MS_f3}--\ref{tab:MS_f5}, which report the mean final objective values over the $5$ runs and standard deviations for the external fields $h_3$ and $h_5$, respectively, as representative examples (the results for the other three external fields are analogous and are therefore omitted). It is also worth noting that BH\_alt and PBH\_alt achieve comparable performance throughout the experiments. A slight advantage of BH\_alt emerges only for the largest values of $L$, suggesting that it may be the preferable choice if the approach is to be extended to larger lattice sizes. 

\begin{figure}
    \subfloat[Cumulative distribution of the relative gap of the final objective value with respect to the best value found by any of the solvers.]{\includegraphics[width=0.45\textwidth]{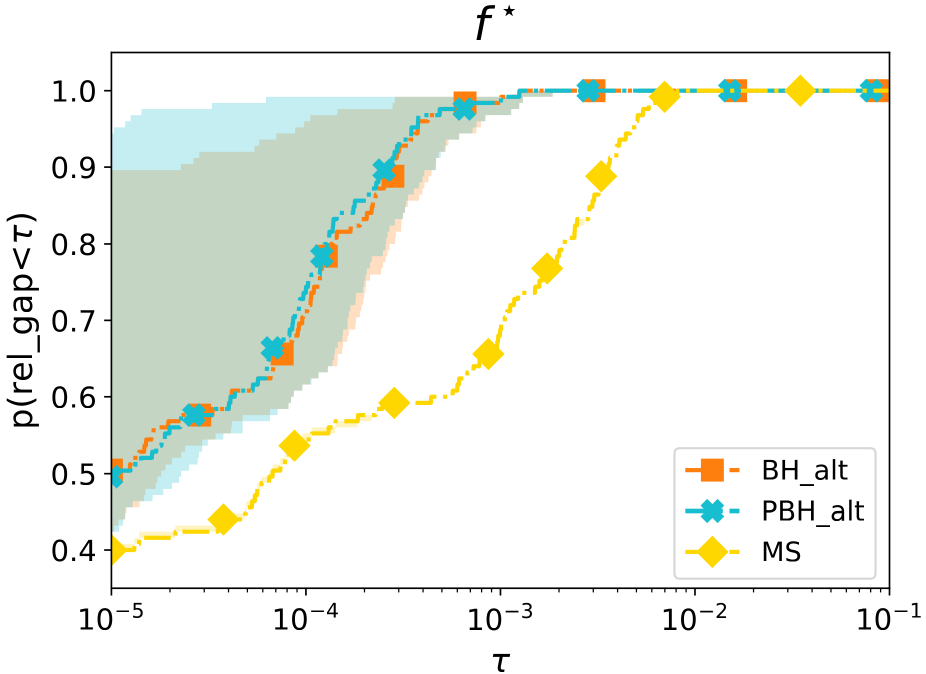}}
    \hfill
    \subfloat[Performance profile with respect to the average number of local solver iterations per outer iteration.]{\includegraphics[width=0.45\textwidth]{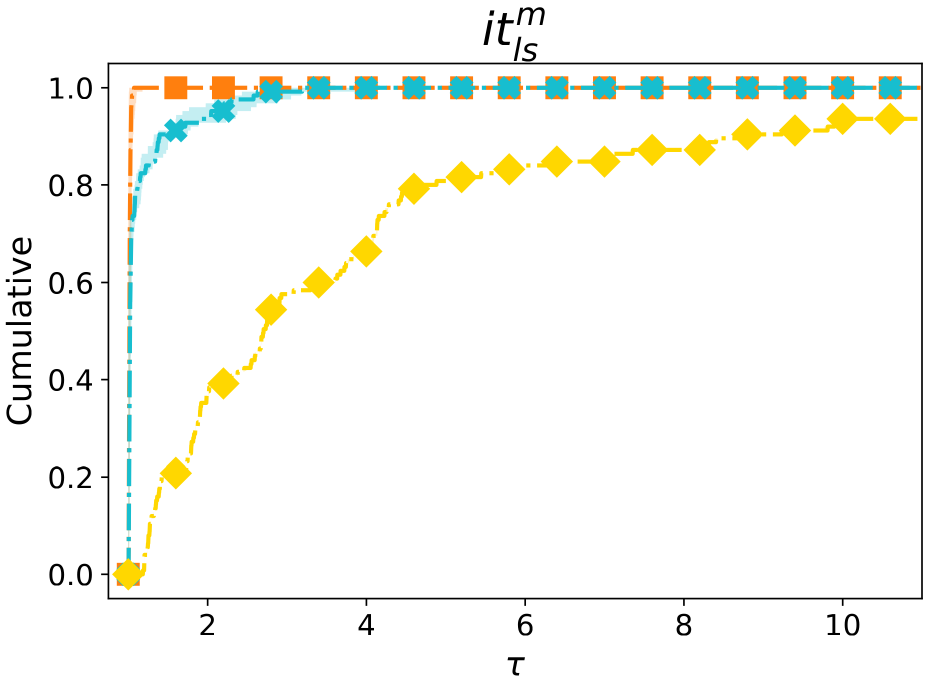}}
    \caption{Results of the comparison between MultiStart (MS) and the alternating variants of Basin Hopping (BH\_alt) and Population Basin Hopping (PBH\_alt).}
    \label{fig:MS}
\end{figure}

\begin{table}[t]
\caption{Comparison on the external field $h_3$ between the mean final function values and standard deviation attained by MultiStart (MS), and the alternating variants of Basin Hopping (BH\_alt) and Population Basin Hopping (PBH\_alt). The lowest mean values are in bold.}
\label{tab:MS_f3}
\begin{tabular*}{\textwidth}{@{\extracolsep\fill}ccccc}
\toprule%
$L$ & $\Delta$ &BH\_alt&PBH\_alt&MS\\%
\midrule
10 & 1.0&\textbf{-3082.89 ($\pm$ 0.00)}&\textbf{-3082.89 ($\pm$ 0.00)}&\textbf{-3082.89 ($\pm$ 0.00)}\\%
& 2.0&\textbf{-3285.35 ($\pm$ 0.00)}&\textbf{-3285.35 ($\pm$ 0.00)}&\textbf{-3285.35 ($\pm$ 0.00)}\\%
& 2.5&\textbf{-3442.45 ($\pm$ 0.00)}&\textbf{-3442.45 ($\pm$ 0.00)}&\textbf{-3442.45 ($\pm$ 0.00)}\\%
& 3.0&\textbf{-3686.19 ($\pm$ 0.00)}&\textbf{-3686.19 ($\pm$ 0.00)}&-3684.57 ($\pm$ 0.00)\\%
& 4.0&\textbf{-4423.08 ($\pm$ 0.00)}&\textbf{-4423.08 ($\pm$ 0.00)}&\textbf{-4423.08 ($\pm$ 0.00)}\\%
\midrule
15 & 1.0&\textbf{-10370.13 ($\pm$ 0.00)}&\textbf{-10370.13 ($\pm$ 0.00)}&\textbf{-10370.13 ($\pm$ 0.00)}\\%
& 2.0&\textbf{-11028.13 ($\pm$ 0.00)}&\textbf{-11028.13 ($\pm$ 0.00)}&\textbf{-11028.13 ($\pm$ 0.00)}\\%
& 2.5&-11552.27 ($\pm$ 0.00)&\textbf{-11552.93 ($\pm$ 1.48)}&-11517.54 ($\pm$ 0.00)\\%
& 3.0&-12383.49 ($\pm$ 2.22)&\textbf{-12385.26 ($\pm$ 1.21)}&-12374.92 ($\pm$ 0.00)\\%
& 4.0&\textbf{-14971.18 ($\pm$ 0.00)}&\textbf{-14971.18 ($\pm$ 0.00)}&\textbf{-14971.18 ($\pm$ 0.00)}\\%
\midrule
20 & 1.0&\textbf{-24652.59 ($\pm$ 0.00)}&\textbf{-24652.59 ($\pm$ 0.00)}&\textbf{-24652.59 ($\pm$ 0.00)}\\%
& 2.0&\textbf{-26304.49 ($\pm$ 0.00)}&\textbf{-26304.49 ($\pm$ 0.00)}&-26250.41 ($\pm$ 0.00)\\%
& 2.5&-27619.80 ($\pm$ 4.21)&\textbf{-27620.45 ($\pm$ 4.83)}&-27544.08 ($\pm$ 0.00)\\%
& 3.0&\textbf{-29645.35 ($\pm$ 1.28)}&-29644.81 ($\pm$ 1.55)&-29610.85 ($\pm$ 0.00)\\%
& 4.0&\textbf{-35565.19 ($\pm$ 0.31)}&-35564.93 ($\pm$ 0.36)&-35565.14 ($\pm$ 0.00)\\%
\midrule
25 & 1.0&\textbf{-48002.16 ($\pm$ 0.00)}&\textbf{-48002.16 ($\pm$ 0.00)}&\textbf{-48002.16 ($\pm$ 0.00)}\\%
& 2.0&-51118.01 ($\pm$ 6.70)&\textbf{-51118.03 ($\pm$ 6.70)}&-50848.18 ($\pm$ 0.00)\\%
& 2.5&\textbf{-53721.84 ($\pm$ 4.68)}&\textbf{-53721.84 ($\pm$ 4.68)}&-53515.69 ($\pm$ 0.00)\\%
& 3.0&\textbf{-57691.34 ($\pm$ 1.18)}&\textbf{-57691.34 ($\pm$ 1.18)}&-57659.92 ($\pm$ 0.00)\\%
& 4.0&-69305.65 ($\pm$ 0.84)&\textbf{-69305.94 ($\pm$ 0.76)}&-69301.26 ($\pm$ 0.00)\\%
\midrule
32 & 1.0&\textbf{-100502.64 ($\pm$ 0.00)}&\textbf{-100502.64 ($\pm$ 0.00)}&\textbf{-100502.64 ($\pm$ 0.00)}\\%
& 2.0&\textbf{-106885.33 ($\pm$ 29.69)}&\textbf{-106885.33 ($\pm$ 29.69)}&-106445.86 ($\pm$ 0.00)\\%
& 2.5&\textbf{-112333.15 ($\pm$ 2.07)}&-112333.02 ($\pm$ 2.30)&-111891.11 ($\pm$ 0.00)\\%
& 3.0&\textbf{-120843.81 ($\pm$ 3.70)}&-120843.71 ($\pm$ 3.61)&-120705.98 ($\pm$ 0.00)\\%
& 4.0&\textbf{-145406.01 ($\pm$ 1.38)}&-145405.72 ($\pm$ 0.77)&-145400.18 ($\pm$ 0.00)\\%
\bottomrule
\end{tabular*}%
\end{table}

\begin{table}[t]
\caption{Comparison on the external field $h_5$ between the mean final function values and standard deviation attained by MultiStart (MS), and the alternating variants of Basin Hopping (BH\_alt) and Population Basin Hopping (PBH\_alt). The lowest mean values are in bold.}
\label{tab:MS_f5}
\begin{tabular*}{\textwidth}{@{\extracolsep\fill}ccccc}
\toprule%
$L$ & $\Delta$ &BH\_alt&PBH\_alt&MS\\%

\midrule
10 & 1.0&\textbf{-3075.65 ($\pm$ 0.00)}&\textbf{-3075.65 ($\pm$ 0.00)}&\textbf{-3075.65 ($\pm$ 0.00)}\\%
& 2.0&\textbf{-3278.45 ($\pm$ 0.00)}&\textbf{-3278.45 ($\pm$ 0.00)}&\textbf{-3278.45 ($\pm$ 0.00)}\\%
& 2.5&\textbf{-3427.16 ($\pm$ 0.61)}&\textbf{-3427.16 ($\pm$ 0.61)}&-3425.29 ($\pm$ 0.00)\\%
& 3.0&\textbf{-3694.43 ($\pm$ 0.14)}&\textbf{-3694.43 ($\pm$ 0.14)}&-3694.37 ($\pm$ 0.00)\\%
& 4.0&\textbf{-4447.34 ($\pm$ 0.00)}&\textbf{-4447.34 ($\pm$ 0.00)}&\textbf{-4447.34 ($\pm$ 0.00)}\\%
\midrule
15 & 1.0&\textbf{-10386.22 ($\pm$ 0.00)}&\textbf{-10386.22 ($\pm$ 0.00)}&\textbf{-10386.22 ($\pm$ 0.00)}\\%
& 2.0&\textbf{-11064.94 ($\pm$ 0.00)}&\textbf{-11064.94 ($\pm$ 0.00)}&\textbf{-11064.94 ($\pm$ 0.00)}\\%
& 2.5&-11586.04 ($\pm$ 0.73)&\textbf{-11587.16 ($\pm$ 1.77)}&-11571.30 ($\pm$ 0.00)\\%
& 3.0&-12411.23 ($\pm$ 2.13)&-12411.79 ($\pm$ 2.30)&\textbf{-12412.88 ($\pm$ 0.00)}\\%
& 4.0&\textbf{-14956.73 ($\pm$ 0.00)}&\textbf{-14956.73 ($\pm$ 0.00)}&\textbf{-14956.73 ($\pm$ 0.00)}\\%
\midrule
20 & 1.0&\textbf{-24554.84 ($\pm$ 0.00)}&\textbf{-24554.84 ($\pm$ 0.00)}&\textbf{-24554.84 ($\pm$ 0.00)}\\%
& 2.0&\textbf{-26050.73 ($\pm$ 2.10)}&\textbf{-26050.73 ($\pm$ 2.10)}&-25922.62 ($\pm$ 0.00)\\%
& 2.5&\textbf{-27410.76 ($\pm$ 6.07)}&\textbf{-27410.76 ($\pm$ 6.07)}&-27353.30 ($\pm$ 5.86)\\%
& 3.0&-29534.36 ($\pm$ 1.18)&\textbf{-29535.22 ($\pm$ 1.61)}&-29519.80 ($\pm$ 0.00)\\%
& 4.0&-35554.21 ($\pm$ 0.19)&\textbf{-35554.34 ($\pm$ 0.13)}&-35552.17 ($\pm$ 0.00)\\%
\midrule
25 & 1.0&\textbf{-47966.45 ($\pm$ 0.00)}&\textbf{-47966.45 ($\pm$ 0.00)}&\textbf{-47966.45 ($\pm$ 0.00)}\\%
& 2.0&\textbf{-50852.84 ($\pm$ 38.39)}&\textbf{-50852.84 ($\pm$ 38.40)}&-50795.00 ($\pm$ 0.00)\\%
& 2.5&\textbf{-53701.34 ($\pm$ 4.83)}&\textbf{-53701.34 ($\pm$ 4.83)}&-53579.16 ($\pm$ 0.00)\\%
& 3.0&-57895.50 ($\pm$ 5.12)&\textbf{-57896.02 ($\pm$ 3.59)}&-57856.31 ($\pm$ 0.00)\\%
& 4.0&\textbf{-69657.23 ($\pm$ 0.47)}&-69656.94 ($\pm$ 0.38)&-69656.44 ($\pm$ 0.00)\\%
\midrule
32 & 1.0&\textbf{-100457.22 ($\pm$ 0.00)}&\textbf{-100457.22 ($\pm$ 0.00)}&\textbf{-100457.22 ($\pm$ 0.00)}\\%
& 2.0&\textbf{-106694.07 ($\pm$ 37.32)}&-106693.52 ($\pm$ 36.66)&-106341.45 ($\pm$ 0.00)\\%
& 2.5&\textbf{-112509.65 ($\pm$ 8.28)}&\textbf{-112509.65 ($\pm$ 8.28)}&-112205.00 ($\pm$ 0.00)\\%
& 3.0&\textbf{-121119.65 ($\pm$ 6.56)}&\textbf{-121119.65 ($\pm$ 6.56)}&-120990.72 ($\pm$ 0.00)\\%
& 4.0&\textbf{-145810.92 ($\pm$ 0.84)}&-145809.87 ($\pm$ 1.60)&-145799.74 ($\pm$ 0.00)\\%
\bottomrule
\end{tabular*}%
\end{table}

\subsection{Comparison with Simulated Annealing, Parallel Tempering, and Differential Evolution}
Finally, we compare the best-performing methods identified in the previous experiments with some state-of-the-art heuristic algorithms for highly irregular problems: DE, a widely used genetic optimization algorithm; two optimization methods commonly adopted in computational physics, namely SA and PT. The purpose of this comparison is mainly to assess the competitiveness of the proposed approaches against established techniques commonly used for spin systems and related energy minimization problems.
Figure \ref{fig:genetici} compares these three algorithms with the alternating variant PBH\_alt. The comparison is carried out with PBH\_alt, rather than BH\_alt, to ensure methodological consistency with the competing population-based methods. This choice does not affect the conclusions, since the previous experiments showed that the two algorithms exhibit very similar performance on the considered benchmark instances. As shown in Figure \ref{fig:genetici_cumulative}, PBH\_alt outperforms the other algorithms in terms of the cumulative distribution of the final objective values, followed by DE, PT, and SA. Furthermore, Figure \ref{fig:genetici_flow} shows that PBH\_alt also achieves the best performance in terms of the percentage of problems for which it attains a better objective value within a given number of iterations, with DE ranking second, while PT and SA exhibit the poorest performance.
The superiority of PBH\_alt is further confirmed by Tables \ref{tab:genetici_f2}--\ref{tab:genetici_f4}, which report the mean final objective values and standard deviation for the external fields $h_2$ and $h_4$, respectively, as representative examples (the results for the other three external fields are analogous and are therefore omitted). Interestingly, DE consistently outperforms both PT and SA, indicating that the genetic approach is more effective than the optimization methods traditionally employed in computational physics for this problem. 

\begin{figure}
    \subfloat[Cumulative distribution of the relative gap of the final objective value with respect to the best value found by any of the solvers.]{\includegraphics[width=0.47\textwidth]{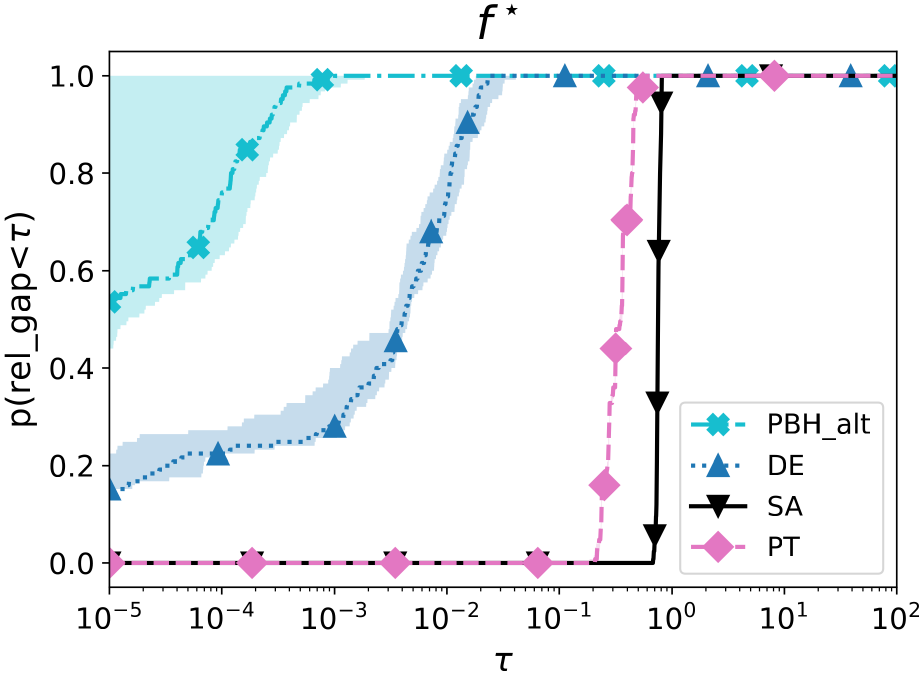} \label{fig:genetici_cumulative}}
    \hfill
    \subfloat[Percentage of problems for which an algorithm attains a better objective value within a given number of iterations.]{\includegraphics[width=0.435\textwidth]{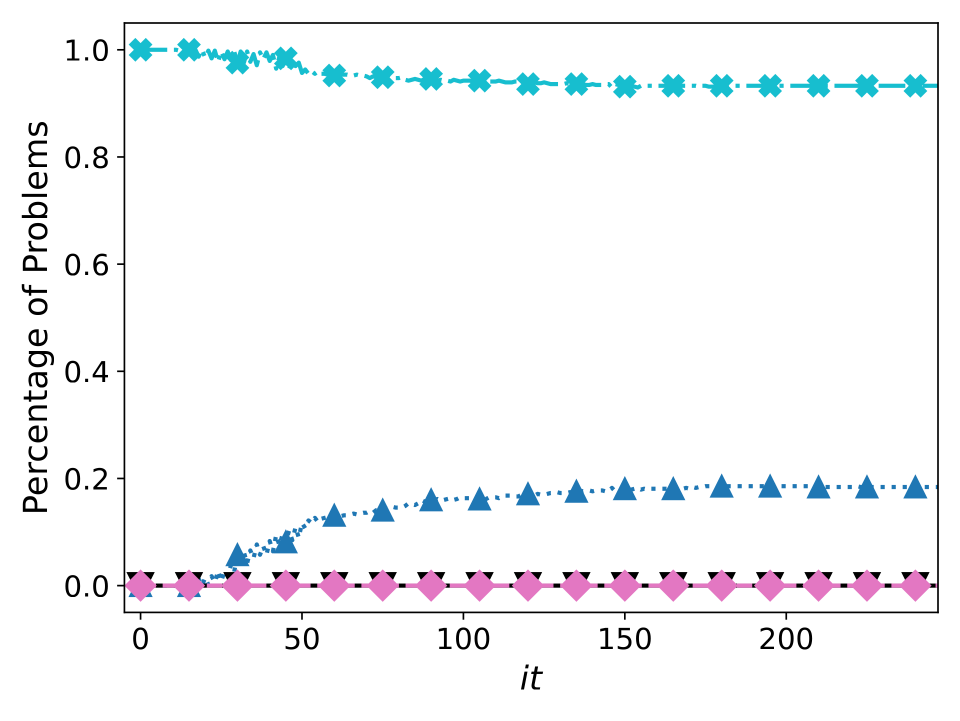}
    \label{fig:genetici_flow}}
    \caption{Results of the comparison between the alternating variant of Population Basin Hopping (PBH\_alt) and the three genetic algorithms: Differential Evolution (DE), Simulated Annealing (SA) and Parallel Tempering (PT).}
    \label{fig:genetici}
\end{figure}

\begin{table}[t]
\caption{Comparison on the external field $h_2$ between the mean final function values and standard deviation attained by the alternating variant of Population Basin Hopping (PBH\_alt) and the three genetic algorithms: Differential Evolution (DE), Simulated Annealing (SA) and Parallel Tempering (PT). The lowest mean values are in bold, while the second-lowest mean values are underlined.}
\label{tab:genetici_f2}
\begin{tabular*}{\textwidth}{@{\extracolsep\fill}cccccc}
\toprule%
$L$ & $\Delta$ &PBH\_alt&DE&SA&PT\\%
\midrule
10 & 1.0&\textbf{-3117.45 ($\pm$ 0.00)}&\textbf{-3117.45 ($\pm$ 0.00)}&-662.91 ($\pm$ 17.54)&\underline{-1751.15 ($\pm$ 13.96)}\\%
& 2.0&\textbf{-3341.93 ($\pm$ 0.00)}&\textbf{-3341.93 ($\pm$ 0.00)}&-840.19 ($\pm$ 6.25)&\underline{-2117.33 ($\pm$ 9.59)}\\%
& 2.5&\textbf{-3499.03 ($\pm$ 0.00)}&\textbf{-3499.03 ($\pm$ 0.00)}&-974.24 ($\pm$ 22.95)&\underline{-2399.05 ($\pm$ 11.70)}\\%
& 3.0&\textbf{-3695.91 ($\pm$ 0.00)}&\underline{-3691.95 ($\pm$ 1.08)}&-1110.87 ($\pm$ 12.15)&-2708.42 ($\pm$ 14.50)\\%
& 4.0&\textbf{-4379.95 ($\pm$ 0.00)}&\underline{-4379.44 ($\pm$ 0.47)}&-1424.23 ($\pm$ 14.46)&-3430.14 ($\pm$ 17.13)\\%
\midrule
15 & 1.0&\textbf{-10414.48 ($\pm$ 0.00)}&\textbf{-10414.48 ($\pm$ 0.00)}&-2094.51 ($\pm$ 7.16)&\underline{-4691.25 ($\pm$ 39.35)}\\%
& 2.0&\textbf{-11123.44 ($\pm$ 0.00)}&\underline{-11016.77 ($\pm$ 4.73)}&-2599.07 ($\pm$ 27.36)&-5813.77 ($\pm$ 39.70)\\%
& 2.5&\textbf{-11658.33 ($\pm$ 0.00)}&\underline{-11533.37 ($\pm$ 20.01)}&-2939.09 ($\pm$ 31.38)&-6634.46 ($\pm$ 39.55)\\%
& 3.0&\textbf{-12416.90 ($\pm$ 0.98)}&\underline{-12399.70 ($\pm$ 3.66)}&-3307.58 ($\pm$ 40.82)&-7515.35 ($\pm$ 48.53)\\%
& 4.0&\textbf{-14915.69 ($\pm$ 0.00)}&\underline{-14915.47 ($\pm$ 0.02)}&-4118.71 ($\pm$ 58.28)&-9562.96 ($\pm$ 76.02)\\%
\midrule
20 & 1.0&\textbf{-24634.42 ($\pm$ 0.00)}&\textbf{-24634.42 ($\pm$ 0.00)}&-4995.08 ($\pm$ 44.36)&\underline{-13645.07 ($\pm$ 94.20)}\\%
& 2.0&\textbf{-26241.02 ($\pm$ 0.00)}&\underline{-26060.64 ($\pm$ 58.36)}&-6126.20 ($\pm$ 70.08)&-16627.01 ($\pm$ 69.76)\\%
& 2.5&\textbf{-27475.04 ($\pm$ 0.44)}&\underline{-27313.71 ($\pm$ 28.78)}&-6866.72 ($\pm$ 78.51)&-18769.23 ($\pm$ 62.56)\\%
& 3.0&\textbf{-29316.82 ($\pm$ 1.20)}&\underline{-29253.44 ($\pm$ 5.55)}&-7676.06 ($\pm$ 106.59)&-21328.61 ($\pm$ 42.47)\\%
& 4.0&\textbf{-35241.76 ($\pm$ 0.18)}&\underline{-35240.42 ($\pm$ 0.70)}&-9482.22 ($\pm$ 109.27)&-27066.22 ($\pm$ 105.11)\\%
\midrule
25 & 1.0&\textbf{-47988.51 ($\pm$ 0.00)}&\underline{-47482.39 ($\pm$ 194.35)}&-9586.37 ($\pm$ 13.86)&-26394.13 ($\pm$ 83.62)\\%
& 2.0&\textbf{-51128.90 ($\pm$ 5.73)}&\underline{-50528.95 ($\pm$ 78.74)}&-11872.44 ($\pm$ 43.88)&-32399.20 ($\pm$ 89.83)\\%
& 2.5&\textbf{-53725.01 ($\pm$ 3.33)}&\underline{-53409.66 ($\pm$ 94.92)}&-13365.09 ($\pm$ 72.37)&-36699.24 ($\pm$ 50.50)\\%
& 3.0&\textbf{-57682.57 ($\pm$ 4.03)}&\underline{-57395.83 ($\pm$ 23.34)}&-15007.92 ($\pm$ 99.72)&-41890.56 ($\pm$ 131.54)\\%
& 4.0&\textbf{-69459.08 ($\pm$ 0.54)}&\underline{-69371.27 ($\pm$ 94.43)}&-18620.86 ($\pm$ 153.22)&-53010.03 ($\pm$ 116.86)\\%
\midrule
32 & 1.0&\textbf{-100412.29 ($\pm$ 0.00)}&\underline{-98826.05 ($\pm$ 330.14)}&-19841.79 ($\pm$ 135.55)&-55306.58 ($\pm$ 100.44)\\%
& 2.0&\textbf{-106770.66 ($\pm$ 8.28)}&\underline{-105334.03 ($\pm$ 162.61)}&-24353.73 ($\pm$ 159.39)&-67742.91 ($\pm$ 146.66)\\%
& 2.5&\textbf{-112336.50 ($\pm$ 20.03)}&\underline{-111299.47 ($\pm$ 53.96)}&-27327.34 ($\pm$ 192.10)&-76891.30 ($\pm$ 140.55)\\%
& 3.0&\textbf{-120783.09 ($\pm$ 5.05)}&\underline{-120266.12 ($\pm$ 50.08)}&-30672.47 ($\pm$ 232.58)&-87285.44 ($\pm$ 161.65)\\%
& 4.0&\textbf{-145585.07 ($\pm$ 0.57)}&\underline{-145108.48 ($\pm$ 20.44)}&-38056.27 ($\pm$ 324.95)&-110818.76 ($\pm$ 126.74)\\%
\bottomrule
\end{tabular*}%
\end{table}

\begin{table}[t]
\caption{Comparison on the external field $h_4$ between the mean final function values and standard deviation attained by the alternating variant of Population Basin Hopping (PBH\_alt) and the three genetic algorithms: Differential Evolution (DE), Simulated Annealing (SA) and Parallel Tempering (PT). The lowest mean values are in bold, while the second-lowest mean values are underlined.}
\label{tab:genetici_f4}
\begin{tabular*}{\textwidth}{@{\extracolsep\fill}cccccc}
\toprule%
$L$ & $\Delta$ &PBH\_alt&DE&SA&PT\\%
\midrule
10 & 1.0&\textbf{-3085.65 ($\pm$ 0.00)}&\textbf{-3085.65 ($\pm$ 0.00)}&-660.00 ($\pm$ 16.36)&\underline{-1753.96 ($\pm$ 12.42)}\\%
& 2.0&\textbf{-3292.50 ($\pm$ 0.00)}&\textbf{-3292.50 ($\pm$ 0.00)}&-819.42 ($\pm$ 31.23)&\underline{-2117.46 ($\pm$ 30.39)}\\%
& 2.5&\textbf{-3444.31 ($\pm$ 0.49)}&\underline{-3437.24 ($\pm$ 0.51)}&-933.37 ($\pm$ 40.66)&-2407.56 ($\pm$ 36.24)\\%
& 3.0&\textbf{-3656.43 ($\pm$ 0.00)}&\underline{-3654.42 ($\pm$ 1.36)}&-1059.02 ($\pm$ 38.61)&-2690.35 ($\pm$ 15.04)\\%
& 4.0&\textbf{-4362.08 ($\pm$ 0.00)}&\underline{-4361.99 ($\pm$ 0.16)}&-1328.37 ($\pm$ 43.57)&-3415.41 ($\pm$ 23.57)\\%
\midrule
15 & 1.0&\textbf{-10405.97 ($\pm$ 0.00)}&\textbf{-10405.97 ($\pm$ 0.00)}&-2079.55 ($\pm$ 8.03)&\underline{-4760.01 ($\pm$ 26.79)}\\%
& 2.0&\textbf{-11103.63 ($\pm$ 0.00)}&\underline{-10933.11 ($\pm$ 24.16)}&-2571.51 ($\pm$ 33.03)&-5819.37 ($\pm$ 49.23)\\%
& 2.5&\textbf{-11627.46 ($\pm$ 3.64)}&\underline{-11567.53 ($\pm$ 8.02)}&-2902.02 ($\pm$ 41.38)&-6611.39 ($\pm$ 36.62)\\%
& 3.0&\textbf{-12489.34 ($\pm$ 0.40)}&\underline{-12472.37 ($\pm$ 2.07)}&-3266.55 ($\pm$ 61.43)&-7520.88 ($\pm$ 25.72)\\%
& 4.0&\textbf{-14938.10 ($\pm$ 0.00)}&\underline{-14937.96 ($\pm$ 0.01)}&-4043.25 ($\pm$ 92.25)&-9500.58 ($\pm$ 43.22)\\%
\midrule
20 & 1.0&\textbf{-24612.20 ($\pm$ 0.00)}&\underline{-24612.19 ($\pm$ 0.01)}&-4922.84 ($\pm$ 13.99)&-13666.45 ($\pm$ 83.83)\\%
& 2.0&\textbf{-26233.66 ($\pm$ 0.00)}&\underline{-25946.30 ($\pm$ 67.52)}&-6021.10 ($\pm$ 24.53)&-16632.73 ($\pm$ 36.99)\\%
& 2.5&\textbf{-27467.70 ($\pm$ 6.21)}&\underline{-27362.91 ($\pm$ 9.99)}&-6791.27 ($\pm$ 19.04)&-18822.61 ($\pm$ 21.53)\\%
& 3.0&\textbf{-29540.51 ($\pm$ 2.04)}&\underline{-29494.22 ($\pm$ 5.66)}&-7645.28 ($\pm$ 38.86)&-21406.97 ($\pm$ 62.77)\\%
& 4.0&\textbf{-35503.55 ($\pm$ 0.15)}&\underline{-35502.05 ($\pm$ 0.83)}&-9517.61 ($\pm$ 54.64)&-27189.13 ($\pm$ 44.27)\\%
\midrule
25 & 1.0&\textbf{-47893.15 ($\pm$ 0.00)}&\underline{-47436.45 ($\pm$ 524.58)}&-9420.91 ($\pm$ 10.13)&-26314.91 ($\pm$ 65.69)\\%
& 2.0&\textbf{-50938.75 ($\pm$ 23.98)}&\underline{-50477.08 ($\pm$ 62.14)}&-11511.44 ($\pm$ 27.24)&-32207.92 ($\pm$ 57.78)\\%
& 2.5&\textbf{-53699.59 ($\pm$ 3.15)}&\underline{-53354.57 ($\pm$ 75.47)}&-12941.64 ($\pm$ 43.38)&-36558.04 ($\pm$ 70.05)\\%
& 3.0&\textbf{-57656.85 ($\pm$ 4.76)}&\underline{-57447.27 ($\pm$ 10.68)}&-14558.37 ($\pm$ 49.76)&-41545.10 ($\pm$ 52.37)\\%
& 4.0&\textbf{-69241.69 ($\pm$ 0.30)}&\underline{-69071.09 ($\pm$ 61.56)}&-18090.55 ($\pm$ 120.56)&-52863.67 ($\pm$ 91.55)\\%
\midrule
32 & 1.0&\textbf{-100633.67 ($\pm$ 0.00)}&\underline{-99571.62 ($\pm$ 300.73)}&-19751.73 ($\pm$ 37.08)&-55420.43 ($\pm$ 124.41)\\%
& 2.0&\textbf{-107078.84 ($\pm$ 21.86)}&\underline{-105509.15 ($\pm$ 145.08)}&-24186.73 ($\pm$ 80.58)&-67584.37 ($\pm$ 166.83)\\%
& 2.5&\textbf{-112426.86 ($\pm$ 9.38)}&\underline{-111624.85 ($\pm$ 66.71)}&-27103.14 ($\pm$ 111.94)&-76603.67 ($\pm$ 75.79)\\%
& 3.0&\textbf{-120812.62 ($\pm$ 5.12)}&\underline{-120263.61 ($\pm$ 33.48)}&-30362.13 ($\pm$ 155.54)&-87044.37 ($\pm$ 103.93)\\%
& 4.0&\textbf{-145408.68 ($\pm$ 0.69)}&\underline{-144923.93 ($\pm$ 20.54)}&-37654.13 ($\pm$ 242.10)&-110563.64 ($\pm$ 141.47)\\%
\bottomrule
\end{tabular*}%
\end{table}

\section{Conclusions}
\label{Sec:conclusions}

This study investigates a global optimization problem of continuous type, associated with the non-convex energy landscape of a classical statistical physics system, namely the random field $XY$ model. Through this work, we demonstrated the usefulness of optimization techniques for the study of physical systems. We have introduced a Riemannian manifold formulation of the problem, which allows for more efficient and numerically stable computations compared to the standard unconstrained angular-variable representation.
Building upon this formulation, we proposed problem-specific perturbation strategies within the Basin Hopping framework and extended them to a Population Basin Hopping scheme. The computational study demonstrated that exploiting the geometric and physical structure of the problem substantially improves the exploration of the energy landscape. In particular, the combination of the alternating perturbation strategy with the population-based framework consistently achieved the best overall performance, outperforming not only the other proposed variants but also the optimization methods traditionally adopted for the random field $XY$ model.
Overall, this work demonstrated the effectiveness of combining modern global optimization algorithms with Riemannian optimization techniques in a physical context, providing a useful framework for exploring complex energy landscapes in disordered continuous-spin systems.
From both optimization and physical perspectives, it is of interest to extend the present study to larger system sizes $(L > 100)$, for which the energy landscape is expected to exhibit an extremely large number of local optima. Such large-scale instances are particularly relevant in statistical physics, as they enable the investigation of \textit{true} thermodynamic behavior and the mitigation of finite-size effects~\cite{XYmodel,PhysRevB.91.134203}. The manifold basin-hopping variants introduced in this work provide a solid foundation for tackling this regime, motivating further algorithmic refinements to efficiently scale the exploration of such highly multimodal landscapes.
Another direction for future research is the extension of the proposed framework to other classes of disordered spin systems and constrained optimization problems on manifolds, such as vector spin glasses and random field $\mathcal{O}(n)$ models.


\backmatter

\section*{Declarations}

\bmhead{Acknowledgements}

The authors are thankful to the anonymous referee of this manuscript for the constructive comments that helped to improve the quality of the work.

\bmhead{Funding}

EM and RA have been supported by funding from the 2021 first FIS
(Fondo Italiano per la Scienza) funding scheme (FIS783 - SMaC -
Statistical Mechanics and Complexity) from
Italian MUR (Ministry of University and Research).

\bmhead{Competing interests}

The authors have no competing interests to declare that are relevant to the content of this article.

\bmhead{Data Availability Statement}

The codes used for the experiments, along with  experimental outputs and the external fields, are available in the following GitHub repository: \href{https://github.com/LorenzoCiarpa/RFXYBH-Riemann}{\tt https://github.com/LorenzoCiarpa/RFXYBH-Riemann}.

\appendix
\section{Local solvers comparison}
\label{sec:app_local}

In this appendix, we provide the results regarding the choice of the local solver. We compare two local algorithms adapted for Riemannian optimization: Riemannian Conjugate Gradient (RCG) and Riemannian Trust Region (RTR) \cite{rtr}. Analyses are performed for different values of $\Delta \in \{0.001, 0.1, 0.5, 1.0, 1.5, 2.0, 2.5, 3.0, 4.0, 5.0\}$ and for system sizes $L\in\{10, 20, 32\}$, considering a three-dimensional lattice ($d=3$). To better explore the energy landscape, each algorithm is run $200$ times from different random initializations while keeping the external field fixed. For consistency with the computational study presented in Section~\ref{sec:results}, the external field realizations $h_1$ and $h_2$ are adopted. 

\par\medskip\noindent
For RCG and RTR we used the \texttt{conjugate\_gradient} and \texttt{trust\_regions} routines provided by Python's package \texttt{pymanopt} \cite{pymanopt}, with the default parameters. We set a tolerance of $10^{-6}$ for the norm of the Riemannian gradient and a maximum number of iterations equal to $10,\!000$.
The goal of this comparison is to identify the \textit{best} local solver in terms of both the number of distinct local minima found and the computational time required. As shown in Table \ref{tab:time_results_local}, the RTR method achieves the lowest computational time in most of the considered cases across all lattice sizes and values of $\Delta$. Moreover, the two solvers are equivalent in terms of the solution quality, i.e., they almost reach the same minimum value for $f$. Specifically, for large values of $\Delta$, all solvers usually reach the same solution $\theta^*$ at each run. For small values of $\Delta$, they converge to different points; since the value of $\Delta$ is not sufficiently close to zero, they sometimes reach local minima, although in most cases they still converge to the putative ground state. Instead, for intermediate values, they usually obtain different solutions with different function values, but the lowest value of $f$ is almost reached by all solvers. As an example, solutions obtained for $L=32$ and $\Delta\in\{0.001,3.0,5.0\}$ are shown in Figure \ref{fig:loc_L32_all}. These results corroborate the theoretical analysis discussed in Section \ref{sec:problem}.
Since RTR, being a second-order method, typically finds more accurate local minima than RCG and showed slightly better performance in our experiments, it was selected as the local solver in BH, PBH and MS.

\begin{table}[t]
\caption{Mean execution time of each algorithm (RCG and RTR) as a function of the parameter $\Delta$, for different system sizes $L$ and external fields $h_1$ and $h_2$. All values are expressed in seconds and rounded to three decimal places. The best time is highlighted in bold.}
\label{tab:time_results_local}
\begin{tabular}{@{}l rrrr rrrr rrrr @{}}
\toprule

& \multicolumn{4}{c}{$L=10$} 
& \multicolumn{4}{c}{$L=20$}
& \multicolumn{4}{c}{$L=32$} \\

\cmidrule(lr){2-5} \cmidrule(lr){6-9} \cmidrule(lr){10-13}

$\Delta$ 
& \multicolumn{2}{c}{$h_1$} & \multicolumn{2}{c}{$h_2$}
& \multicolumn{2}{c}{$h_1$} & \multicolumn{2}{c}{$h_2$}
& \multicolumn{2}{c}{$h_1$} & \multicolumn{2}{c}{$h_2$} \\

\cmidrule(lr){2-3} \cmidrule(lr){4-5}
\cmidrule(lr){6-7} \cmidrule(lr){8-9}
\cmidrule(lr){10-11} \cmidrule(lr){12-13}

& RCG & RTR & RCG & RTR 
& RCG & RTR & RCG & RTR 
& RCG & RTR & RCG & RTR \\

\midrule

0.001 & 0.913 & \textbf{0.105} & 0.682 & \textbf{0.108} & 24.536 & \textbf{1.340} & 19.086 & \textbf{1.254} & 99.873 & \textbf{17.994} & 114.48 & \textbf{10.374} \\
0.1   & 0.307 & \textbf{0.100} & 0.224 & \textbf{0.095} & 3.522 & \textbf{1.263} & 3.062 & \textbf{1.112} & 27.681 & \textbf{9.427} & 41.564 & \textbf{7.627} \\
0.5   & 0.212 & \textbf{0.100} & 0.160 & \textbf{0.072} & 1.971 & \textbf{1.321} & 1.912 & \textbf{0.926} & 15.494 & \textbf{10.158} & 19.473 & \textbf{6.175} \\
1.0   & 0.174 & \textbf{0.097} & 0.112 & \textbf{0.078} & 1.456 & \textbf{1.154} & \textbf{1.024} & 1.096 & 11.359 & \textbf{10.856} & 13.537 & \textbf{7.715} \\
1.5   & 0.182 & \textbf{0.115} & 0.105 & \textbf{0.079} & 1.239 & \textbf{1.179} & \textbf{1.026} & 1.268 & 8.209 & \textbf{4.328} & 7.813 & \textbf{6.754} \\
2.0   & 0.139 & \textbf{0.129} & 0.097 & \textbf{0.073} & 0.826 & \textbf{0.785} & \textbf{0.921} & 1.396 & 6.173 & \textbf{3.851} & \textbf{5.721} & 6.192 \\
2.5   & 0.187 & \textbf{0.104} & 0.118 & \textbf{0.101} & \textbf{0.722} & 0.801 & \textbf{0.755} & 1.183 & 5.177 & \textbf{3.505} & \textbf{4.675} & 4.715 \\
3.0   & 0.201 & \textbf{0.116} & 0.112 & \textbf{0.109} & \textbf{0.659} & 0.734 & \textbf{0.670} & 0.969 & 3.711 & \textbf{3.003} & \textbf{4.344} & 4.619 \\
4.0   & 0.125 & \textbf{0.069} & 0.114 & \textbf{0.088} & \textbf{0.472} & 0.486 & \textbf{0.517} & 0.674 & 3.418 & \textbf{2.105} & 3.648 & \textbf{2.422} \\
5.0   & 0.099 & \textbf{0.051} & 0.076 & \textbf{0.071} & \textbf{0.386} & 0.403 & 0.411 & \textbf{0.387} & 1.329 & \textbf{0.937} & 2.320 & \textbf{2.018} \\

\botrule
\end{tabular}
\end{table}

\begin{figure}[h!]
    \centering
    \begin{subfigure}[t]{\linewidth}
         \centering
          \includegraphics[width=0.48\linewidth]{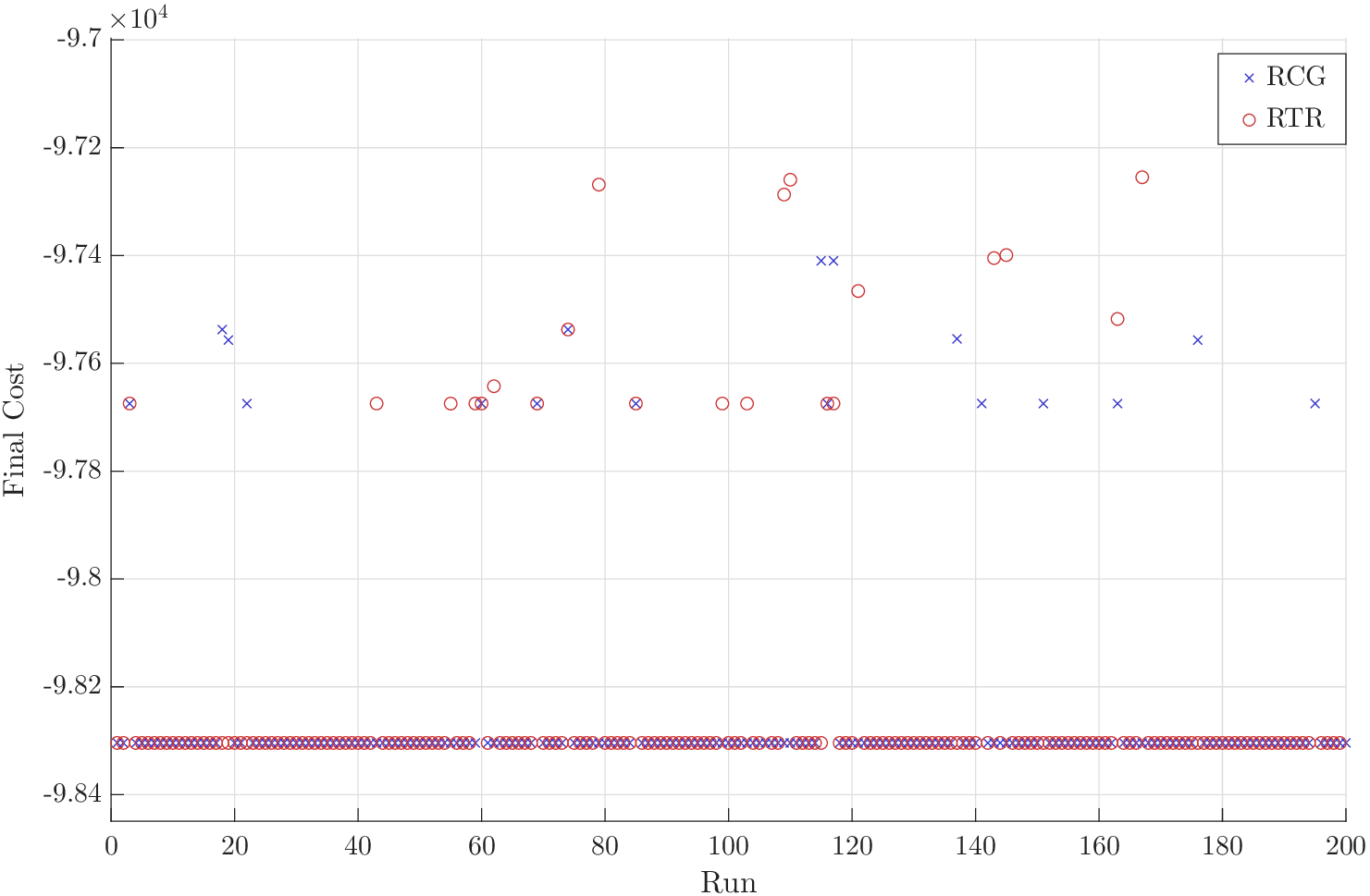}
          \includegraphics[width=0.48\linewidth]{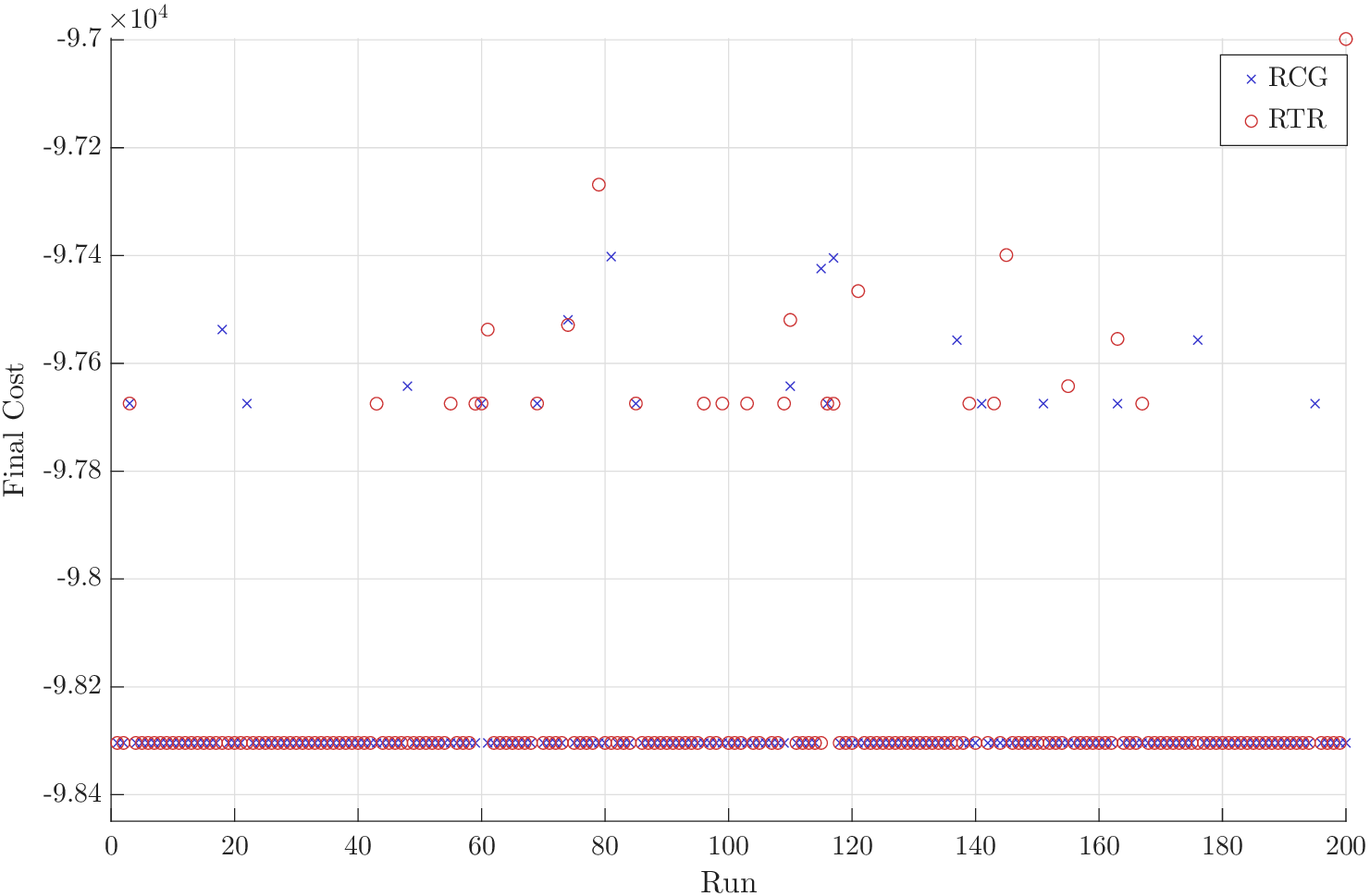}
         \caption{$\Delta=0.001$}
     \end{subfigure}
     \begin{subfigure}[t]{\linewidth}
         \centering
          \includegraphics[width=0.48\linewidth]{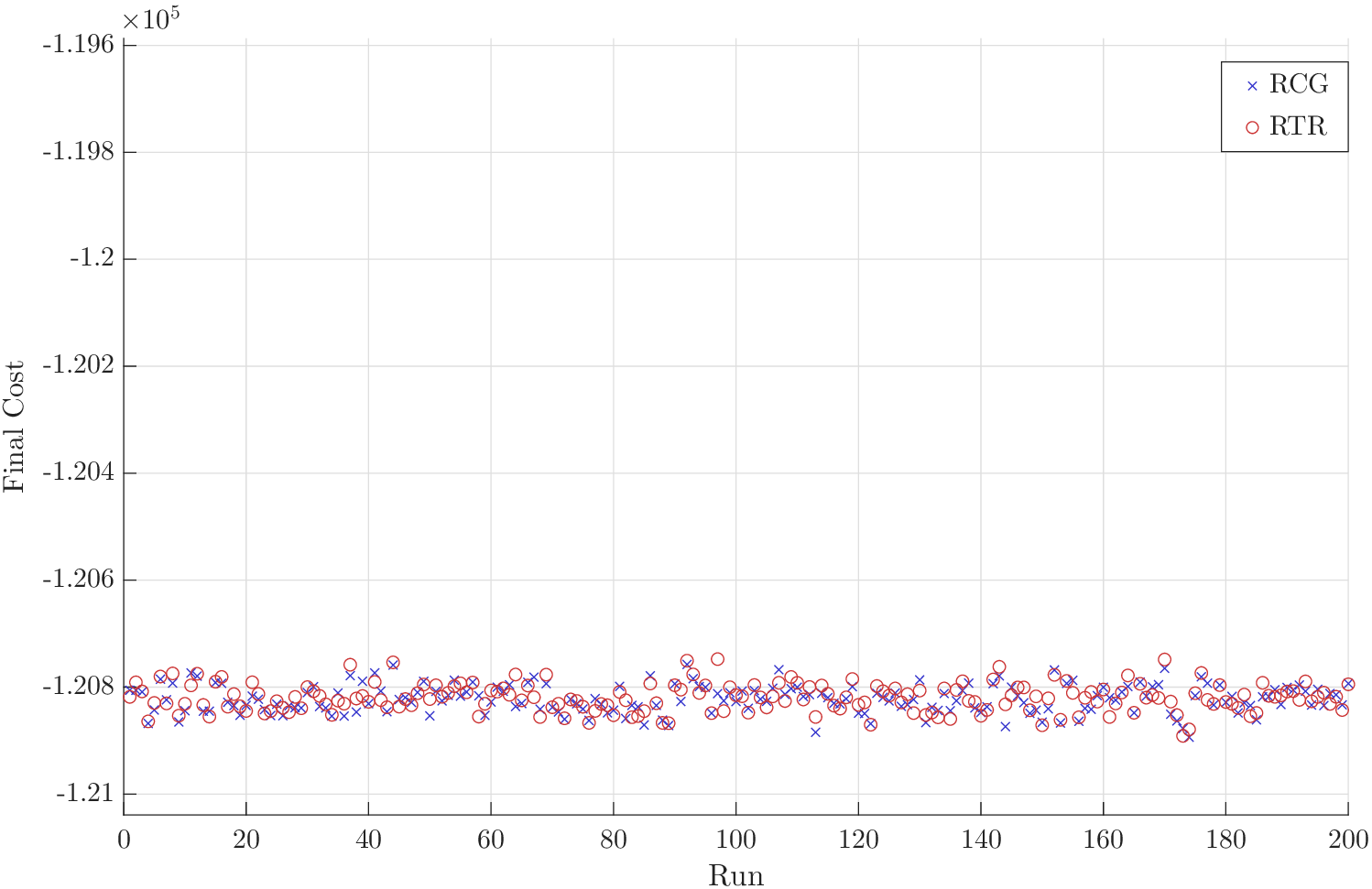}
          \includegraphics[width=0.48\linewidth]{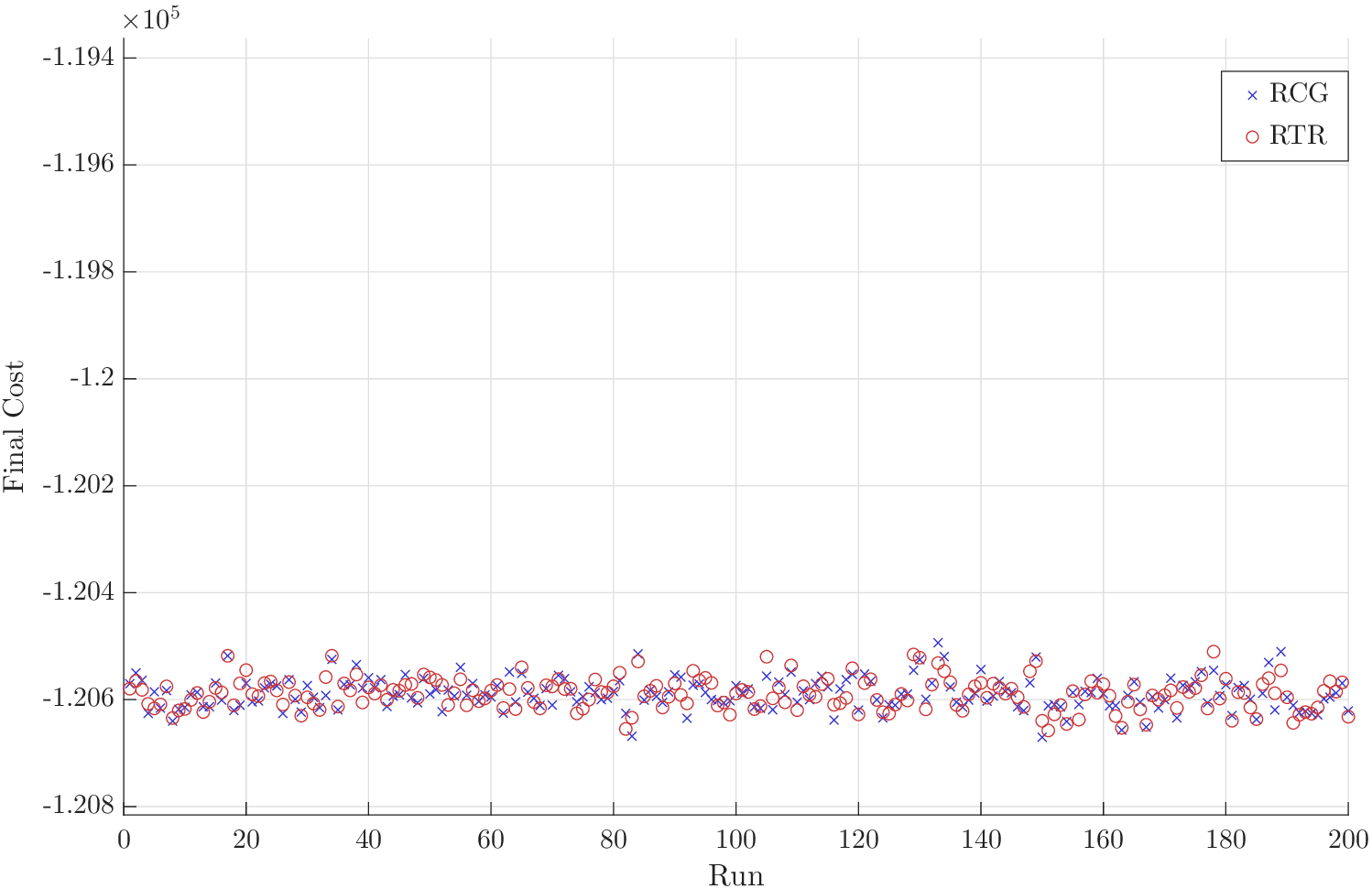}
         \caption{$\Delta=3.0$}
     \end{subfigure}
     \begin{subfigure}[t]{\linewidth}
         \centering
          \includegraphics[width=0.48\linewidth]{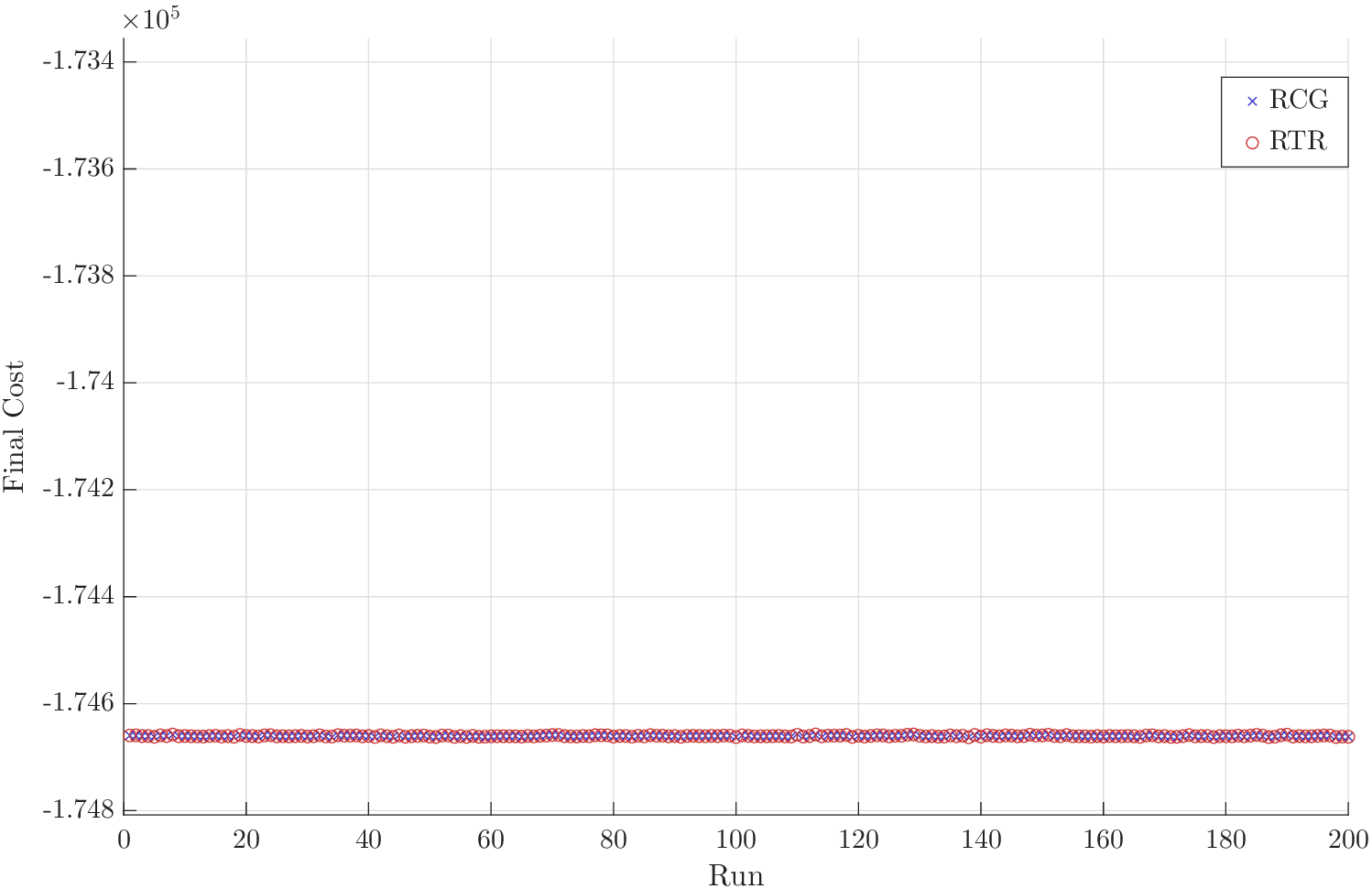}
          \includegraphics[width=0.48\linewidth]{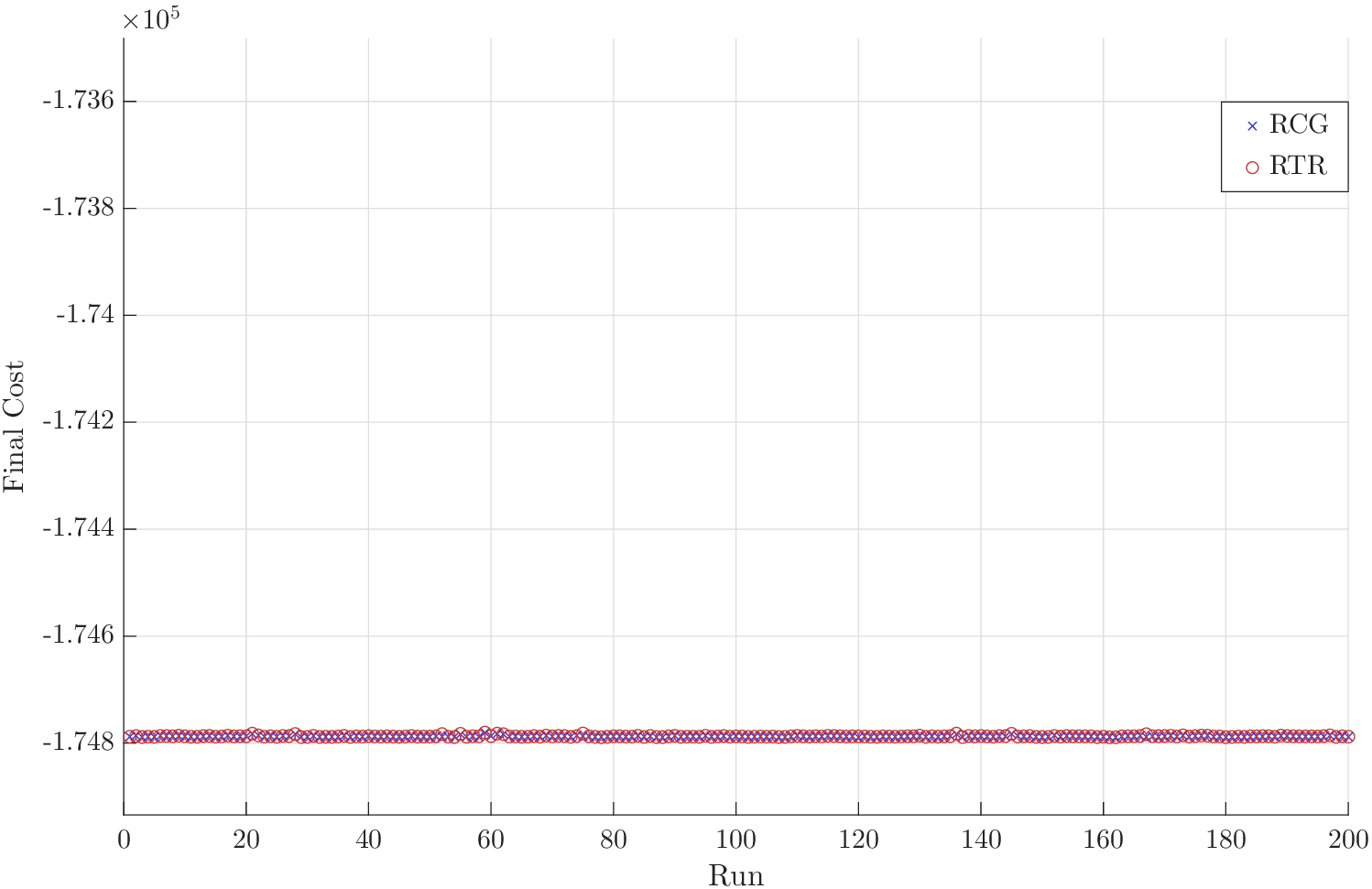}
         \caption{$\Delta=5.0$}
     \end{subfigure}
    \caption{Comparison of local results for different values of $\Delta$ with system size $L=32$, showing the objective function values obtained over 200 independent runs. The results for $h_1$ are shown on the left, while those for $h_2$ are shown on the right.}
    \label{fig:loc_L32_all}
    
\end{figure}

\bibliography{bibliography}

\end{document}